%

\documentclass[10pt]{amsart}
\usepackage{amssymb,MnSymbol}
\usepackage{amsthm,amsmath}

\title[Measuring the interactions among variables]{Measuring the interactions among variables of functions over the unit hypercube}

\author{Jean-Luc Marichal}
\address{Mathematics Research Unit, FSTC, University of Luxembourg, 6, rue Coudenhove-Kalergi, L-1359 Luxembourg, Grand Duchy of Luxembourg.}
\email{jean-luc.marichal[at]uni.lu}

\author{Pierre Mathonet}
\address{Mathematics Research Unit, FSTC, University of Luxembourg, 6, rue Coudenhove-Kalergi, L-1359 Luxembourg, Grand Duchy of Luxembourg.}
\email{pierre.mathonet[at]uni.lu}

\date{February 21, 2011}

\begin{document}

\theoremstyle{plain}
\newtheorem{theorem}{Theorem}
\newtheorem{lemma}[theorem]{Lemma}
\newtheorem{proposition}[theorem]{Proposition}
\newtheorem{corollary}[theorem]{Corollary}
\newtheorem{fact}[theorem]{Fact}
\newtheorem*{main}{Main Theorem}

\theoremstyle{definition}
\newtheorem{definition}[theorem]{Definition}
\newtheorem{example}[theorem]{Example}

\theoremstyle{remark}
\newtheorem*{conjecture}{\indent Conjecture}
\newtheorem{remark}{Remark}
\newtheorem{claim}{Claim}

\newcommand{\N}{\mathbb{N}}
\newcommand{\R}{\mathbb{R}}
\newcommand{\I}{\mathbb{I}}
\newcommand{\Vspace}{\vspace{2ex}}
\newcommand{\bfx}{\mathbf{x}}
\newcommand{\bfy}{\mathbf{y}}
\newcommand{\bfh}{\mathbf{h}}
\newcommand{\bfe}{\mathbf{e}}
\newcommand{\Q}{Q}

\begin{abstract}
By considering a least squares approximation of a given square integrable function $f\colon[0,1]^n\to\R$ by a multilinear polynomial of a specified degree, we define an index which measures the overall interaction among variables of $f$. This definition extends the concept of Banzhaf interaction index introduced in cooperative game theory. Our approach is partly inspired from multilinear regression analysis, where interactions among the independent variables are taken into consideration. We show that this interaction index has appealing properties which naturally generalize several properties of the Banzhaf interaction index. In particular, we interpret this index as an expected value of the difference quotients of $f$ or, under certain natural conditions on $f$, as an expected value of the derivatives of $f$. Finally, we discuss a few applications of the interaction index in aggregation function theory.
\end{abstract}

\keywords{Interaction index, multilinear polynomial, least squares approximation, difference operator, aggregation function, cooperative game, fuzzy game}

\subjclass[2010]{Primary 26B35, 41A10, 93E24; Secondary 39A70, 91A12}

\maketitle

\section{Introduction}

Sophisticated mathematical models are extensively used in a variety of areas of mathematics and physics, and especially in applied
fields such as engineering, life sciences, economics, finance, and many others. Here we consider the simple
situation where the model aims at explaining a single dependent variable, call it $y$, in terms of $n$ independent variables $x_1,\ldots,x_n$.
Such a model is usually described through an equation of the form
$$
y=f(x_1,\ldots,x_n),
$$
where $f$ is a real function of $n$ variables.

Now, suppose that the function $f$ describing the model is given and that we want to investigate its behavior through simple terms. For instance, suppose we want to measure the overall contribution (importance or influence) of each independent variable to the model. A natural approach to this problem consists in defining the overall importance of each variable as the coefficient of this variable in the least squares linear approximation of $f$. This approach was considered
by Hammer and Holzman~\cite{HamHol92} for pseudo-Boolean functions and cooperative games $f\colon\{0,1\}^n\to\R$. Interestingly enough, they observed
that the coefficient of each variable in the linear approximation is exactly the Banzhaf power index \cite{Ban65,DubSha79} of the corresponding player in
the game $f$.

In many practical situations, the information provided by the overall importance degree of each variable may be far insufficient due to the
possible interactions among the variables. Then, a more flexible approach to investigate the behavior of $f$ consists in measuring an overall importance degree for
each combination (subset) of variables. Such a concept was first introduced in \cite{KahKalLin88} for Boolean functions $f\colon\{0,1\}^n\to\{0,1\}$ (see also \cite{BenLin90,BouKahKalKatLin92}), then in \cite{Mar00} for pseudo-Boolean functions and games $f\colon\{0,1\}^n\to\R$ (see also \cite{MarKojFuj07}), and in \cite{GraLab01} for square
integrable functions $f\colon [0,1]^n\to\R$.

In addition to these importance indexes, we can also measure directly the interaction degree among the variables by defining an overall interaction index for each combination of variables. This concept was introduced axiomatically in \cite{GraRou99} (see also \cite{FujKojMar06}) for games $f\colon\{0,1\}^n\to\R$. However, it has not yet been extended to real functions defined on $[0,1]^n$, even though such functions are of growing importance for instance in aggregation function theory. In this paper we intend to fill this gap by defining and investigating an appropriate index to measure the interaction degree among variables of a given square
integrable function $f\colon [0,1]^n\to\R$.

Our sources of inspiration to define such an index are actually threefold:
\begin{description}
 \item[In cooperative game theory] Interaction indexes were introduced axiomatically a decade ago \cite{GraRou99} for games $f\colon\{0,1\}^n\to\R$ (see also \cite{FujKojMar06}). The best known interaction indexes are the Banzhaf and Shapley interaction indexes, which extend the Banzhaf and Shapley power indexes. Following Hammer and Holzman's approach~\cite{HamHol92}, it was shown in \cite{GraMarRou00} that the Banzhaf interaction index can be obtained from least squares approximations of the game under consideration by games whose multilinear representations are of lower degrees.

\item[In analysis] Considering a sufficiently differentiable real function $f$ of several variables, the \emph{local} interaction among certain variables at a given point $\mathbf{a}$ can be obtained through the coefficients of the Taylor expansion of $f$ at $\mathbf{a}$, that is, through the coefficients of the \emph{local} polynomial approximation of $f$ at $\mathbf{a}$. By contrast, if we want to define an \emph{overall} interaction index, we naturally have to consider a \emph{global} approximation of $f$ by a polynomial function. 

\item[In statistics] Multilinear statistical models have been proposed to take into account the interaction among the independent variables (see for instance \cite{AikWes91}): two-way interactions appear as the coefficients of leading terms in quadratic models, three-way interactions appear as the coefficients of leading terms in cubic models, and so forth.
\end{description}

On the basis of these observations, we naturally consider the least squares approximation problem of a given square integrable function $f\colon [0,1]^n\to\R$ by a polynomial of a given degree. As multiple occurrences in combinations of variables are not relevant, we will only consider multilinear polynomial functions. Then, given a subset $S\subseteq\{1,\ldots,n\}$, an index ${\mathcal I}(f,S)$ measuring the interaction among the variables $\{x_i:i\in S\}$ of $f$ is defined as the coefficient of the monomial $\prod_{i\in S}x_i$ in the best approximation of $f$ by a multilinear polynomial of degree at most $|S|$. This definition is given and discussed in Section 2, where we also provide an interpretation in the context of cooperative fuzzy games (Remark~\ref{rem:s87df6}).

In Section 3 we show that this new index has many appealing properties, such as linearity, continuity, and symmetry. In particular, we show that, similarly to the Banzhaf interaction index introduced for games, the index ${\mathcal I}(f,S)$ can be interpreted in a sense as an expected value of the discrete derivative of $f$ in the direction of $S$ (Theorem~\ref{thm:Labreuche}) or, equivalently, as an expected value of the difference quotient of $f$ in the direction of $S$ (Corollary~\ref{cor:sfadfsd}). Under certain natural conditions on $f$, the index can also be interpreted as an expected value of the derivative of $f$ in the direction of $S$ (Proposition~\ref{Beta1}). These latter results reveal a strong analogy between the interaction index and the overall importance index introduced by Grabisch and Labreuche~\cite{GraLab01}.

In Section 4 we discuss certain applications in aggregation function theory, including the computation of explicit expressions of the interaction index for the discrete Choquet integrals. We also define and investigate a normalized version of the interaction index to compare different functions in terms of interaction degrees of their variables and a coefficient of determination to measure the quality of multilinear approximations.

We employ the following notation throughout the paper. Let $\I^n$ denote the $n$-dimensional unit cube $[0,1]^n$. We denote by $F(\I^n)$ the class of all functions $f\colon \I^n\to\R$ and by $L^2(\I^n)$ the class of square integrable functions $f\colon \I^n\to\R$ (modulo equality almost everywhere). For any $S\subseteq N=\{1,\ldots,n\}$, we denote by $\mathbf{1}_S$ the characteristic vector of $S$ in $\{0,1\}^n$.

\section{Interaction indexes}

In this section we first recall the concepts of power and interaction indexes introduced in cooperative game theory and how the Banzhaf index can be obtained from the solution of a least squares approximation problem. Then we show how this approximation problem can be extended to functions in $L^2(\I^n)$ and, from this extension, we introduce an interaction index for such functions.

Recall that a (\emph{cooperative})~\emph{game} on a finite set of players $N=\{1,\ldots,n\}$ is a set function $v\colon 2^N\to\R$ which assigns to each coalition $S$ of players a real number $v(S)$ representing the \emph{worth} of $S$.\footnote{Usually, the condition $v(\varnothing)=0$ is required for $v$ to define a game. However, we do not need this restriction in the present paper.} Through the usual identification of the subsets of $N$ with the elements of $\{0,1\}^n$, a game $v\colon 2^N\to\R$ can be equivalently described by a pseudo-Boolean function $f\colon\{0,1\}^n\to\R$. The correspondence is given by $v(S)=f(\mathbf{1}_S)$ and
\begin{equation}\label{eq:pBfPF}
f(\bfx)=\sum_{S\subseteq N} v(S)\,\prod_{i\in S}x_i\,\prod_{i\in N\setminus S}(1-x_i).
\end{equation}
Equation~(\ref{eq:pBfPF}) shows that any pseudo-Boolean function $f\colon\{0,1\}^n\to\R$ can always be represented by a multilinear polynomial of degree at
most $n$ (see \cite{HamRud68}), which can be further simplified into
\begin{equation}\label{eq:fMob}
f(\bfx)=\sum_{S\subseteq N} a(S)\,\prod_{i\in S}x_i\, ,
\end{equation}
where the set function $a\colon 2^N\to\R$, called the \emph{M\"obius transform} of $v$, is defined by
$$
a(S)=\sum_{T\subseteq S} (-1)^{|S|-|T|}\, v(T).
$$

Let $\mathcal{G}^N$ denote the set of games on $N$. A \emph{power index} \cite{Sha53} on $N$ is a function $\phi\colon\mathcal{G}^N\times N\to\R$
that assigns to every player $i\in N$ in a game $f\in\mathcal{G}^N$ his/her prospect $\phi(f,i)$ from playing the game.
An \emph{interaction index} \cite{GraRou99} on $N$ is a function $I\colon\mathcal{G}^N\times 2^N\to\R$ that
measures in a game $f\in\mathcal{G}^N$ the interaction degree among the players of a coalition $S\subseteq N$.

For instance, the \emph{Banzhaf
interaction index} \cite{GraRou99} of a coalition $S\subseteq N$ in a game $f\in\mathcal{G}^N$ is defined (in terms of the M\"obius transform of $f$) by
\begin{equation}\label{eq:IBI23}
I_\mathrm{B}(f,S)=\sum_{T\supseteq S} \Big(\frac 12\Big)^{|T|-|S|} a(T),
\end{equation}
and the \emph{Banzhaf power index} \cite{DubSha79} of a player $i\in N$ in a game $f\in\mathcal{G}^N$ is defined by $\phi_\mathrm{B}(f,i)=I_\mathrm{B}(f,\{i\})$.

It is noteworthy that $I_\mathrm{B}(f,S)$ can be interpreted as an average of the \emph{$S$-difference} (or \emph{discrete $S$-derivative}) $\Delta^Sf$ of $f$. Indeed, it can be shown (see \cite[{\S}2]{GraMarRou00}) that
\begin{equation}\label{DiscBanzhaf}
I_\mathrm{B}(f,S)=\frac 1{2^n}\sum_{\bfx\in\{0,1\}^n}(\Delta^Sf)(\bfx),
\end{equation}
where $\Delta^Sf$ is defined inductively by $\Delta^{\varnothing}f=f$ and $\Delta^Sf=\Delta^{\{i\}}\Delta^{S\setminus\{i\}}f$ for $i\in S$, with $\Delta^{\{i\}}f(\mathbf{x})=f(\mathbf{x}\mid x_i=1)-f(\mathbf{x}\mid x_i=0)$.

We now recall how the Banzhaf interaction index can be obtained from a least squares approximation problem. For $k\in\{0,\ldots,n\}$, denote by $V_k$ the set of all multilinear polynomials $g\colon\{0,1\}^n\to\R$ of degree at most $k$, that is of the form
\begin{equation}\label{eq:GinVK}
g(\bfx)=\sum_{\textstyle{S\subseteq N\atop |S|\leqslant k}} c(S)\prod_{i\in S}x_i\, ,
\end{equation}
where the coefficients $c(S)$ are real numbers. For a given pseudo-Boolean function $f\colon\{0,1\}^n\to\R$, the best $k$th approximation of $f$ is the unique multilinear
polynomial $f_k\in V_k$ that minimizes the squared distance
$$
\sum_{\bfx\in\{0,1\}^n}(f(\bfx)-g(\bfx))^2
$$
among all $g\in V_k$. A closed-form expression of
$f_k$ was given in \cite{HamHol92} for $k=1$ and $k=2$ and in \cite{GraMarRou00} for arbitrary $k\leqslant n$. In fact, when $f$ is given in its multilinear form (\ref{eq:fMob}) we obtain
$$
f_k(\bfx)=\sum_{\textstyle{S\subseteq N\atop |S|\leqslant k}} a_k(S)\prod_{i\in S}x_i,
$$
where
$$
a_k(S)=a(S)+(-1)^{k-|S|}\sum_{\textstyle{T\supseteq S\atop |T|>k}}{|T|-|S|-1\choose k-|S|}\,\Big(\frac 12\Big)^{|T|-|S|}a(T).
$$
It is then easy to see that
\begin{equation}\label{shgfsdf}
I_\mathrm{B}(f,S)=a_{|S|}(S).
\end{equation}
Thus,
$I_\mathrm{B}(f,S)$ is exactly the coefficient of the monomial $\prod_{i\in S}x_i$ in the best approximation of $f$ by a multilinear polynomial of degree at most $|S|$.

Taking into account this approximation problem, we now define an interaction index for functions in $L^2(\I^n)$ as follows. Denote by $W_k$ the set of all multilinear polynomials $g\colon\I^n\to\R$ of
degree at most $k$. Clearly, these functions are also of the form (\ref{eq:GinVK}). For a given function $f\in L^2(\I^n)$, we define the
\emph{best $k$th (multilinear) approximation of $f$} as the multilinear polynomial $f_k\in W_k$ that minimizes the squared distance
\begin{equation}\label{eq:dist-f-g}
\int_{\I^n}\big(f(\bfx)-g(\bfx)\big)^2\, d\bfx
\end{equation}
among all $g\in W_k$.

It is easy to see that $W_k$ is a linear subspace of $L^2(\I^n)$ of dimension $\sum_{s=0}^k {n\choose s}$. Indeed, $W_k$ is the linear span of
the basis $B_k=\{v_S : S\subseteq N, \, |S|\leqslant k\}$, where the functions $v_S\colon\I^n\to\R$ are defined by $v_S(\bfx)=\prod_{i\in
S}x_i$. Note that formula (\ref{eq:dist-f-g}) also writes $\lVert f-g\rVert^2$ where $\|\cdot\|$ is the standard norm of $L^2(\I^n)$ associated with the inner product $\langle f,g\rangle=\int_{\I^n}f(\bfx)g(\bfx)\, d\bfx$. Therefore, using the general theory of Hilbert spaces, the solution of this approximation problem
exists and is uniquely determined by the orthogonal projection of $f$ onto $W_k$. This projection can be easily expressed in any orthonormal basis of $W_k$. But here it is very easy to see that the set $B'_k=\{w_S : S\subseteq N, \, |S|\leqslant k\}$, where $w_S\colon\I^n\to\R$ is given by
$$
w_S(\bfx)=12^{|S|/2}\prod_{i\in S}\Big(x_i-\frac 12\Big)
=12^{|S|/2}\,\sum_{T\subseteq S}\Big(-\frac 12\Big)^{|S|-|T|} v_T(\bfx),
$$
forms such an orthonormal basis for $W_k$.

The following immediate theorem gives the components of the best $k$th approximation of a function $f\in L^2(\I^n)$ in the bases $B_k$ and
$B'_k$.
\begin{theorem}\label{thm:Coeffk}
For every $k\in\{0,\ldots,n\}$, the best $k$th approximation of $f\in L^2(\I^n)$ is the function
\begin{equation}\label{eq:akS2}
f_k=\sum_{\textstyle{T\subseteq N\atop |T|\leqslant k}}\langle f,w_T\rangle \, w_T ~=~ \sum_{\textstyle{S\subseteq N\atop |S|\leqslant k}}
a_k(S)\, v_S\, ,
\end{equation}
where
\begin{equation}\label{eq:akS}
a_k(S)=\sum_{\textstyle{T\supseteq S\atop |T|\leqslant k}} \Big(\! -\frac 12\Big)^{|T|-|S|} 12^{|T|/2}\, \langle f,w_T\rangle.
\end{equation}
\end{theorem}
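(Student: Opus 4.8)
The plan is to exploit the fact that $B'_k$ is an orthonormal basis of $W_k$, so that the orthogonal projection of $f$ onto $W_k$ has the explicit form $f_k=\sum_{|T|\leqslant k}\langle f,w_T\rangle\, w_T$. Indeed, once one knows that $f_k$ exists and equals the orthogonal projection of $f$ onto $W_k$ (which the excerpt already records via the Hilbert space theory), the first equality in \eqref{eq:akS2} is just the standard expansion of a projection in an orthonormal basis. So the only thing left to verify is the second equality together with formula \eqref{eq:akS}, which is a matter of rewriting $\sum_T\langle f,w_T\rangle\, w_T$ in the (non-orthonormal) monomial basis $B_k=\{v_S\}$.

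First I would recall, or quickly check, that $B'_k$ is genuinely orthonormal: using $\langle v_S,v_T\rangle=\int_{\I^n}\prod_{i\in S\cup T}x_i\,\prod_{i\in S\cap T}x_i\,d\bfx$ one gets that the $w_S$ are built from the shifted Legendre-type factors $x_i-\tfrac12$, which are orthogonal on $[0,1]$ with $\int_0^1 (x-\tfrac12)^2\,dx=\tfrac1{12}$; hence $\langle w_S,w_T\rangle=\delta_{S,T}$ after accounting for the normalizing constant $12^{|S|/2}$. This justifies the projection formula. (If one prefers, this orthonormality has already been asserted in the text just above the theorem, so it may simply be cited.)

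Next I would substitute the given expansion $w_T(\bfx)=12^{|T|/2}\sum_{R\subseteq T}(-\tfrac12)^{|T|-|R|}v_R(\bfx)$ into $f_k=\sum_{|T|\leqslant k}\langle f,w_T\rangle\, w_T$, obtaining a double sum over pairs $R\subseteq T$ with $|T|\leqslant k$. Interchanging the order of summation to collect the coefficient of each fixed monomial $v_S$ means summing over all $T$ with $S\subseteq T$ and $|T|\leqslant k$, which yields exactly
\[
a_k(S)=\sum_{\textstyle{T\supseteq S\atop |T|\leqslant k}}\Big(\!-\frac12\Big)^{|T|-|S|}12^{|T|/2}\,\langle f,w_T\rangle,
\]
i.e. \eqref{eq:akS}. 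This is a routine reindexing with no real obstacle; the one point to be careful about is the constraint $|T|\leqslant k$ being carried correctly through the interchange (the sum over $T$ is only over sets of size at most $k$, not all supersets of $S$).

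The statement is called "immediate" in the paper, and indeed there is no hard step: the substance is entirely in the Hilbert-space projection machinery, which is invoked rather than proved here. If I wanted a fully self-contained argument I would instead characterize $f_k$ by the normal equations $\langle f-f_k,v_S\rangle=0$ for all $|S|\leqslant k$, plug in $f_k=\sum_{|S|\leqslant k}a_k(S)v_S$, and solve the resulting linear system; but since the orthonormal basis $B'_k$ is already in hand, diagonalizing via $B'_k$ is the cleaner route and the whole proof reduces to the bookkeeping described above. The main "obstacle," such as it is, is purely notational: keeping the index sets and the powers of $-\tfrac12$ and $12^{1/2}$ straight when passing between the two bases.
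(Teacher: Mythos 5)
Your proposal is correct and follows exactly the route the paper intends: the paper labels the theorem ``immediate'' and gives no written proof, relying precisely on the orthonormal expansion of the projection in $B'_k$ followed by the change of basis to $B_k$, which is the reindexing you carry out. The only computation of substance, the orthonormality of $B'_k$ and the collection of the coefficient of $v_S$ over all $T\supseteq S$ with $|T|\leqslant k$, is done correctly in your write-up.
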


By analogy with (\ref{shgfsdf}), to measure the interaction degree among variables of an arbitrary function $f\in L^2(\I^n)$, we naturally define an index $\mathcal{I}\colon L^2(\I^n)\times
2^{N}\to\R$ as $\mathcal{I}(f,S)=a_{|S|}(S)$, where $a_{|S|}(S)$ is obtained from $f$ by (\ref{eq:akS}). We will see in the next section that this index indeed measures an importance degree when $|S|=1$ and an interaction degree when $|S|\geqslant 2$.

\begin{definition}
Let $\mathcal{I}\colon L^2(\I^n)\times 2^{N}\to\R$ be defined as $\mathcal{I}(f,S)=12^{|S|/2}\langle f,w_S\rangle$, that is,
\begin{equation}\label{eq:GenBI1}
\mathcal{I}(f,S)= 12^{|S|}\int_{\I^n}f(\bfx)\, \prod_{i\in S}\Big(x_i-\frac 12\Big)\, d\bfx.
\end{equation}
\end{definition}

Thus we have defined an interaction index from an approximation (projection) problem. Conversely, this index characterizes this
approximation problem. Indeed, as the following result shows, the best $k$th approximation of $f\in L^2(\I^n)$ is the unique function of $W_k$ that preserves the interaction index for all the $s$-subsets such that $s\leqslant k$. The discrete analogue of this result was established in
\cite{GraMarRou00} for the Banzhaf interaction index (\ref{eq:IBI23}).

\begin{proposition}\label{prop:Proj-IB}
A function $f_k\in W_k$ is the best $k$th approximation of $f\in L^2(\I^n)$ if and only if
$\mathcal{I}(f,S)=\mathcal{I}(f_k,S)$ for all $S\subseteq N$ such that $|S|\leqslant k$.
\end{proposition}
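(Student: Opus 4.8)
The plan is to reduce the statement to the standard characterization of the orthogonal projection onto a closed subspace of a Hilbert space. As observed in the discussion preceding Theorem~\ref{thm:Coeffk}, the best $k$th approximation $f_k$ of $f$ is precisely the orthogonal projection of $f$ onto the finite-dimensional subspace $W_k$ of $L^2(\I^n)$, and $B'_k=\{w_S : S\subseteq N,\ |S|\leqslant k\}$ is an orthonormal basis of $W_k$. So everything will follow once we unwind what ``orthogonal projection'' means in terms of the functions $w_S$.

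First I would recall that a function $f_k\in W_k$ is the orthogonal projection of $f$ onto $W_k$ if and only if the residual $f-f_k$ is orthogonal to $W_k$. Since $W_k$ is the linear span of $B'_k$ and the inner product is linear in each argument, $f-f_k\perp W_k$ is equivalent to $\langle f-f_k, w_S\rangle=0$ for every $S\subseteq N$ with $|S|\leqslant k$, that is, to $\langle f,w_S\rangle=\langle f_k,w_S\rangle$ for all such $S$.

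It then remains only to translate this condition into an identity of interaction indexes. Multiplying both sides by the nonzero constant $12^{|S|/2}$ and using the very definition $\mathcal{I}(g,S)=12^{|S|/2}\langle g,w_S\rangle$, which is meaningful for every $g\in L^2(\I^n)$ and in particular for $g=f_k$ since $W_k\subseteq L^2(\I^n)$, the condition $\langle f,w_S\rangle=\langle f_k,w_S\rangle$ for all $|S|\leqslant k$ becomes exactly $\mathcal{I}(f,S)=\mathcal{I}(f_k,S)$ for all $S\subseteq N$ with $|S|\leqslant k$. Chaining the two equivalences proves both implications simultaneously.

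I do not expect any genuine obstacle here: the proposition is essentially a restatement, in the language of the index $\mathcal{I}$, of the projection theorem in Hilbert spaces. The only points requiring a (trivial) verification are that orthogonality to each element of the basis $B'_k$ is the same as orthogonality to all of $W_k$, and that the formula defining $\mathcal{I}$ may legitimately be applied to the element $f_k$; both are immediate. If one prefers a computational route, one can instead invoke Theorem~\ref{thm:Coeffk} directly: its expression $f_k=\sum_{|T|\leqslant k}\langle f,w_T\rangle\, w_T$ gives $\langle f_k,w_S\rangle=\langle f,w_S\rangle$ for $|S|\leqslant k$ by orthonormality of $B'_k$, hence $\mathcal{I}(f_k,S)=\mathcal{I}(f,S)$, and the uniqueness of the projection established there handles the converse.
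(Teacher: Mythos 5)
Your argument is correct and is essentially identical to the paper's own proof: both reduce the statement to the fact that $\mathcal{I}(f,S)=\mathcal{I}(f_k,S)$ is equivalent to $\langle f-f_k,w_S\rangle=0$, and that orthogonality of the residual to the spanning set $B'_k$ characterizes the orthogonal projection onto $W_k$. Your version merely spells out the intermediate steps that the paper leaves implicit.
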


\begin{proof}
By definition, we have $\mathcal{I}(f,S)=\mathcal{I}(f_k,S)$ if and only if $\langle f-f_k,w_S\rangle =0$ for all
$S\subseteq N$ such that $|S|\leqslant k$, and the latter condition characterizes the projection of $f$ onto $W_k$.
\end{proof}

The explicit conversion formulas between the interaction index and the best approximation can be easily derived from the preceding results. On the one hand, by
(\ref{eq:akS}), we have
$$
a_k(S)=\sum_{\textstyle{T\supseteq S\atop |T|\leqslant k}}\Big(\! -\frac 12\Big)^{|T|-|S|}\, \mathcal{I}(f,T)\, ,\qquad
\mbox{for}~|S|\leqslant k.
$$
On the other hand, by Proposition~\ref{prop:Proj-IB} and Equation (\ref{eq:akS2}), we also have
\begin{eqnarray*}
\mathcal{I}(f,S) &=& \mathcal{I}(f_k,S)=12^{|S|/2}\,\langle f_k,w_S\rangle \\
&=& 12^{|S|/2}\sum_{\textstyle{T\subseteq N\atop |T|\leqslant k}}a_k(T)\,\langle v_T,w_S\rangle
\end{eqnarray*}
that is, by calculating the inner product,
\begin{equation}\label{eq:sfhksd45}
\mathcal{I}(f,S)=\sum_{\textstyle{T\supseteq S\atop |T|\leqslant k}}\Big(\frac 12\Big)^{|T|-|S|}\, a_k(T)\, ,\qquad
\mbox{for}~|S|\leqslant k.
\end{equation}

We also note that, by (\ref{eq:akS2}), the best $k$th approximation of $f$ can be expressed in terms of $\mathcal{I}$ as
\begin{equation}\label{eq:Approx}
f_k(\bfx) = \sum_{\textstyle{T\subseteq N\atop |T|\leqslant k}} \mathcal{I}(f,T)\, \prod_{i\in T}\Big(x_i-\frac 12\Big).
\end{equation}
Using the notation $\boldsymbol{\frac 12}=(\frac 12,\ldots,\frac 12)$, the Taylor expansion formula then shows that
$$
\mbox{$\mathcal{I}(f,S)=(D^S f_k)(\boldsymbol{\frac 12})\, ,\qquad$ for $|S|\leqslant k$,}
$$
where $D^S$ stands for the partial derivative operator with respect to the variables $x_i$ for $i\in S$. In particular,
$\mathcal{I}(f,\varnothing)=\int_{\I^n} f(\bfx)\, d\bfx = f_k(\textstyle{\boldsymbol{\frac 12}})$.

We also have the following result, which shows that the index $\mathcal{I}$ generalizes the Banzhaf interaction index $I_\mathrm{B}$. First note that the restriction operation $f\mapsto f|_{\{0,1\}^n}$ defines a linear bijection between the spaces $W_n$ and $V_n$. The inverse map is the so-called ``multilinear extension''.

\begin{proposition}
For every function $f\in W_n$ and every subset $S\subseteq N$, we have $\mathcal{I}(f,S)=I_\mathrm{B}(f|_{\{0,1\}^n},S)$.
\end{proposition}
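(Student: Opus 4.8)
The plan is to compute $\mathcal{I}(f,S)$ directly from its integral representation~(\ref{eq:GenBI1}) and to identify the outcome with the closed form~(\ref{eq:IBI23}) of the Banzhaf interaction index. First I would write $f\in W_n$ in its multilinear form $f(\bfx)=\sum_{T\subseteq N}a(T)\prod_{i\in T}x_i$. Since the restriction $f|_{\{0,1\}^n}$ is a pseudo-Boolean function whose (unique) multilinear representation is exactly this one, the set function $a$ is precisely the M\"obius transform of $f|_{\{0,1\}^n}$, so that by~(\ref{eq:IBI23}) we have $I_\mathrm{B}(f|_{\{0,1\}^n},S)=\sum_{T\supseteq S}(1/2)^{|T|-|S|}a(T)$.

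Next, by linearity of the integral in~(\ref{eq:GenBI1}) it suffices to evaluate $12^{|S|}\int_{\I^n}\prod_{i\in T}x_i\,\prod_{i\in S}(x_i-1/2)\,d\bfx$ for each $T\subseteq N$. As the integrand is a product of one-variable factors, the integral factors coordinate by coordinate: a coordinate $i\notin S\cup T$ contributes $\int_0^1 dx_i=1$; a coordinate in $T\setminus S$ contributes $\int_0^1 x_i\,dx_i=1/2$; a coordinate in $S\setminus T$ contributes $\int_0^1(x_i-1/2)\,dx_i=0$; and a coordinate in $S\cap T$ contributes $\int_0^1 x_i(x_i-1/2)\,dx_i=1/12$. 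Hence the integral vanishes unless $S\subseteq T$, in which case it equals $(1/2)^{|T|-|S|}(1/12)^{|S|}$.

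Putting the pieces together yields $\mathcal{I}(f,S)=12^{|S|}\sum_{T\supseteq S}a(T)\,(1/2)^{|T|-|S|}(1/12)^{|S|}=\sum_{T\supseteq S}(1/2)^{|T|-|S|}a(T)$, which is exactly $I_\mathrm{B}(f|_{\{0,1\}^n},S)$. I do not expect a real obstacle here; the only care needed is the bookkeeping of the four coordinate cases and the observation that the $S\setminus T$ case forces the sum to run over $T\supseteq S$. As an even shorter alternative, one may invoke~(\ref{eq:sfhksd45}) with $k=n$: since $f\in W_n$, its best $n$th approximation is $f$ itself, so $a_n(T)=a(T)$, and~(\ref{eq:sfhksd45}) immediately gives $\mathcal{I}(f,S)=\sum_{T\supseteq S}(1/2)^{|T|-|S|}a(T)=I_\mathrm{B}(f|_{\{0,1\}^n},S)$.
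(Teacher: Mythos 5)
Your proposal is correct. Your primary argument—expanding $f$ in the monomial basis and evaluating $12^{|S|}\int_{\I^n}\prod_{i\in T}x_i\prod_{i\in S}(x_i-\frac 12)\, d\bfx$ coordinate by coordinate (the four cases $i\notin S\cup T$, $i\in T\setminus S$, $i\in S\setminus T$, $i\in S\cap T$ giving $1$, $\frac 12$, $0$, $\frac 1{12}$ respectively)—is a genuinely different and entirely self-contained route: it works straight from the definition (\ref{eq:GenBI1}) without invoking the approximation machinery. The paper instead applies the conversion formula (\ref{eq:sfhksd45}) with $k=n$, noting $a_n(T)=a(T)$ since $f_n=f$; this is exactly the ``shorter alternative'' you give at the end. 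The two are close in substance, because (\ref{eq:sfhksd45}) was itself obtained by computing the inner products $\langle v_T,w_S\rangle$, which is the same coordinate-by-coordinate integral you carry out; the paper's route buys brevity by reusing that computation, while yours makes the proposition independent of Proposition~\ref{prop:Proj-IB}. Your identification of $a$ with the M\"obius transform of $f|_{\{0,1\}^n}$ is justified by the uniqueness of the multilinear representation of a pseudo-Boolean function, which the paper's equation (\ref{eq:fMob}) supports, so no gap there.
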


\begin{proof}
Let $f\in W_n$ of the form $f(\mathbf{x})=\sum_{T\subseteq N}a(T)\prod_{i\in T}x_i$ and let $S\subseteq N$. Then, using (\ref{eq:sfhksd45}) for $k=n$ and recalling that $a(T)=a_n(T)$ for every $T\subseteq N$, we obtain
$$
\mathcal{I}(f,S)=\sum_{T\supseteq S}\Big(\frac 12\Big)^{|T|-|S|}\, a(T).
$$
We then conclude by formula (\ref{eq:IBI23}).
\end{proof}

\begin{remark}\label{rem:s87df6}
In cooperative game theory, the set $F(\I^n)$ can be interpreted as the set of \emph{fuzzy games} (see for instance Aubin~\cite{Aub81}). In this context, a \emph{fuzzy coalition} is simply an element $\bfx\in\I^n$ and a \emph{fuzzy game} $f\in F(\I^n)$ is a mapping that associates with any fuzzy coalition its \emph{worth}. It is now clear that the index $\mathcal{I}$ is a natural extension of the Banzhaf interaction index to fuzzy games in $L^2(\I^n)$ when this index is regarded as a solution of a multilinear approximation problem.
\end{remark}

\section{Properties and interpretations}

Most of the interaction indexes defined for games, including the Banzhaf interaction index, share a set of fundamental properties such as linearity, symmetry, and monotonicity (see \cite{FujKojMar06}). Many of them can also be expressed as expected values of the discrete derivatives (differences) of their arguments (see for instance (\ref{DiscBanzhaf})). In this section we show that the index $\mathcal{I}$ fulfills direct generalizations of these properties to the framework of functions of $L^2(\I^n)$. In particular, we show that $\mathcal{I}(f,S)$ can be interpreted as an expected value of the difference quotient of $f$ in the direction of $S$ or, under certain natural conditions on $f$, as an expected value of the derivative of $f$ in the direction of $S$.

The first result follows from the very definition of the index.

\begin{proposition}\label{prop:lin56}
For every $S\subseteq N$, the mapping $f\mapsto\mathcal{I}(f,S)$ is linear and continuous.
\end{proposition}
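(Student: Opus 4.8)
The plan is to read off both assertions directly from the representation $\mathcal{I}(f,S)=12^{|S|/2}\langle f,w_S\rangle$ recorded in the Definition, which exhibits $f\mapsto\mathcal{I}(f,S)$ as a fixed scalar multiple of the linear functional $f\mapsto\langle f,w_S\rangle$ on the Hilbert space $L^2(\I^n)$, where $w_S$ is a fixed element of that space.

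\emph{Linearity} then follows at once from the linearity of the inner product in its first argument: for $f,g\in L^2(\I^n)$ and $\alpha,\beta\in\R$ one has $\mathcal{I}(\alpha f+\beta g,S)=12^{|S|/2}\langle\alpha f+\beta g,w_S\rangle=\alpha\,12^{|S|/2}\langle f,w_S\rangle+\beta\,12^{|S|/2}\langle g,w_S\rangle=\alpha\,\mathcal{I}(f,S)+\beta\,\mathcal{I}(g,S)$. Equivalently, one may appeal to the integral formula~(\ref{eq:GenBI1}) and the linearity of the integral.

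\emph{Continuity}: since the map is linear, it suffices to show it is bounded. By the Cauchy--Schwarz inequality, $|\mathcal{I}(f,S)|=12^{|S|/2}\,|\langle f,w_S\rangle|\leqslant 12^{|S|/2}\,\lVert w_S\rVert\,\lVert f\rVert$; and because $w_S$ is a member of the orthonormal basis $B'_k$ (for any $k\geqslant|S|$) we have $\lVert w_S\rVert=1$, so $|\mathcal{I}(f,S)|\leqslant 12^{|S|/2}\,\lVert f\rVert$ for all $f\in L^2(\I^n)$. Hence $f\mapsto\mathcal{I}(f,S)$ is a bounded linear functional on $L^2(\I^n)$, and therefore continuous. (Boundedness of a linear functional is equivalent to continuity, so we are done.)

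There is essentially no obstacle in this argument; the only point deserving a word of care is the identity $\lVert w_S\rVert=1$. This is either quoted from the orthonormality of $B'_k$ already established just before Theorem~\ref{thm:Coeffk}, or checked directly from the product structure of the integral together with the one-dimensional computation $\int_0^1\bigl(t-\tfrac12\bigr)^2\,dt=\tfrac1{12}$, which gives $\int_{\I^n}\prod_{i\in S}\bigl(x_i-\tfrac12\bigr)^2\,d\bfx=12^{-|S|}$ and hence $\lVert w_S\rVert^2=12^{|S|}\cdot 12^{-|S|}=1$.
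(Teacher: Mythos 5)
Your argument is correct and is exactly the one the paper has in mind: the paper states that the result ``follows from the very definition of the index,'' meaning precisely that $\mathcal{I}(\cdot,S)=12^{|S|/2}\langle\cdot\,,w_S\rangle$ is a scalar multiple of an inner product with a fixed element of $L^2(\I^n)$, hence a bounded (so continuous) linear functional. You have simply written out the details, including the correct verification that $\lVert w_S\rVert=1$ via $\int_0^1(t-\tfrac12)^2\,dt=\tfrac1{12}$.
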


Recall that if $\pi$ is a permutation on $N$, then, for every function $f\in F(\I^n)$, the permutation $\pi$ acts on $f$ by
$
\pi(f)(x_1,\ldots,x_n)=f(x_{\pi(1)},\ldots,x_{\pi(n)}).
$
The following result is then an easy consequence of the change of variables theorem.
\begin{proposition}
The index $\mathcal{I}$ is symmetric. That is, for every permutation $\pi$ on $N$, every $f\in L^2(\I^n)$, and every $S\subseteq N$, we have $\mathcal{I}(\pi(f),\pi(S))=\mathcal{I}(f,S)$.
\end{proposition}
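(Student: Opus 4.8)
The plan is to work directly from the integral formula (\ref{eq:GenBI1}) for $\mathcal{I}$ and to reduce the claim to a coordinate-permuting change of variables on the cube. First I would write out $\mathcal{I}(\pi(f),\pi(S))$ using the definition: since $|\pi(S)|=|S|$,
$$
\mathcal{I}(\pi(f),\pi(S))=12^{|S|}\int_{\I^n}f(x_{\pi(1)},\ldots,x_{\pi(n)})\,\prod_{i\in\pi(S)}\Big(x_i-\frac 12\Big)\,d\bfx .
$$

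Next I would apply the substitution $y_j=x_{\pi(j)}$ for $j\in N$, i.e.\ the linear bijection of $\I^n$ onto itself that permutes coordinates according to $\pi$. Its Jacobian determinant has absolute value $1$ and it maps $\I^n$ onto $\I^n$, so the change of variables theorem (valid for integrable, hence a fortiori for square integrable, functions on $\I^n$) leaves the value of the integral unchanged. The only point requiring care is how the two factors of the integrand transform: the argument of $f$ becomes $(y_1,\ldots,y_n)$, and for the product one observes that $i\in\pi(S)$ is equivalent to $i=\pi(j)$ for some $j\in S$, whence $\prod_{i\in\pi(S)}(x_i-\frac 12)=\prod_{j\in S}(x_{\pi(j)}-\frac 12)=\prod_{j\in S}(y_j-\frac 12)$.

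Putting these together yields
$$
\mathcal{I}(\pi(f),\pi(S))=12^{|S|}\int_{\I^n}f(\bfy)\,\prod_{j\in S}\Big(y_j-\frac 12\Big)\,d\bfy=\mathcal{I}(f,S),
$$
which is the desired identity. There is essentially no serious obstacle here; the one thing deserving a line of care is the bookkeeping of the index set $\pi(S)$ under the substitution (making sure $\pi$ and $\pi^{-1}$ are used consistently), and, to be thorough, noting that a coordinate permutation preserves square integrability so that $\mathcal{I}(\pi(f),\cdot)$ is well defined to begin with.
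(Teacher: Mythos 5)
Your proof is correct and follows exactly the route the paper intends: the paper states the result as ``an easy consequence of the change of variables theorem'' without further detail, and your argument is precisely that change of variables (the measure-preserving coordinate permutation $y_j=x_{\pi(j)}$) carried out explicitly, with the index bookkeeping $\prod_{i\in\pi(S)}(x_i-\frac 12)=\prod_{j\in S}(y_j-\frac 12)$ done correctly.
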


We now provide an interpretation of $\mathcal{I}(f,S)$ as an expected value of the $S$-derivative $D^Sf$ of $f$. The proof immediately follows from repeated integrations by parts of (\ref{eq:GenBI1}) and thus is omitted.

For $S\subseteq N$, denote by $h_S$ the probability density function of independent beta distributions on $\I^n$ with parameters $\alpha =\beta =2$, that is, $h_S(\bfx)=6^{|S|}\prod_{i\in S}x_i(1-x_i)$.

\begin{proposition}\label{Beta1}
For every $S\subseteq N$ and every $f\in L^2(\I^n)$ such that $D^Tf$ is continuous and integrable on $]0,1[^n$ for all $T\subseteq S$, we have
\begin{equation}\label{eq:BetaDer}
\mathcal{I}(f,S)=\int_{\I^n}h_S(\bfx)\, D^Sf(\bfx)\, d\bfx.
\end{equation}
\end{proposition}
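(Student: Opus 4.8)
The plan is to establish the identity by induction on $|S|$, reducing to the one‑variable case and then iterating. Starting from the defining formula
$$
\mathcal{I}(f,S)= 12^{|S|}\int_{\I^n}f(\bfx)\, \prod_{i\in S}\Big(x_i-\tfrac 12\Big)\, d\bfx,
$$
I would single out one index $i\in S$, write $S=\{i\}\cup T$ with $T=S\setminus\{i\}$, and perform integration by parts in the variable $x_i$ alone, keeping the other variables fixed. The key observation is the choice of antiderivative: rather than the ``obvious'' primitive $\tfrac12(x_i-\tfrac12)^2$, one uses
$$
\int\Big(x_i-\tfrac12\Big)\, dx_i ~=~ -\tfrac12\, x_i(1-x_i) + \text{const},
$$
since $\frac{d}{dx_i}\big[-\tfrac12 x_i(1-x_i)\big] = x_i-\tfrac12$, and this primitive has the virtue of vanishing at both endpoints $x_i=0$ and $x_i=1$. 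Hence the boundary terms disappear (this is where the hypothesis that the relevant derivatives are integrable and the function is nice enough to apply integration by parts is used), and we get
$$
12\int_0^1 f(\bfx)\Big(x_i-\tfrac12\Big)\, dx_i ~=~ 12\int_0^1 \tfrac12\, x_i(1-x_i)\, D^{\{i\}}f(\bfx)\, dx_i ~=~ 6\int_0^1 x_i(1-x_i)\, D^{\{i\}}f(\bfx)\, dx_i.
$$

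Next I would substitute this back into the expression for $\mathcal{I}(f,S)$ and observe that, since $D^{\{i\}}$ commutes with the remaining integrations and with multiplication by factors $(x_j-\tfrac12)$ for $j\in T$, we obtain
$$
\mathcal{I}(f,S) ~=~ 12^{|T|}\int_{\I^n} 6\, x_i(1-x_i)\, \Big(D^{\{i\}}f\Big)(\bfx)\,\prod_{j\in T}\Big(x_j-\tfrac12\Big)\, d\bfx ~=~ 6^{1}\cdot \mathcal{I}\big(x_i(1-x_i)^{?}\dots\big),
$$
more precisely $\mathcal{I}(f,S) = 6\int_{\I^n} x_i(1-x_i)\,\mathcal{I}_T\big(D^{\{i\}}f\big)(\dots)$ — here it is cleanest to recognize that $\mathcal{I}(f,S)$ with the factor $x_i(1-x_i)$ pulled out is exactly $6\int x_i(1-x_i)\big[12^{|T|}\int f_{x_i}\prod_{j\in T}(x_j-\tfrac12)\big]$, and then apply the inductive hypothesis to the function $D^{\{i\}}f$ with the smaller set $T$, whose derivatives $D^{U}D^{\{i\}}f = D^{U\cup\{i\}}f$ are continuous and integrable by hypothesis for $U\subseteq T$. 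Carrying the induction through yields
$$
\mathcal{I}(f,S) ~=~ 6^{|S|}\int_{\I^n}\Big(\prod_{i\in S}x_i(1-x_i)\Big)\, \big(D^Sf\big)(\bfx)\, d\bfx ~=~ \int_{\I^n} h_S(\bfx)\, D^Sf(\bfx)\, d\bfx,
$$
which is the claimed formula, using $h_S(\bfx)=6^{|S|}\prod_{i\in S}x_i(1-x_i)$.

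The main obstacle is purely analytic rather than algebraic: justifying the integration by parts and the interchange of integration order under the stated regularity hypothesis, namely that $D^Tf$ is continuous and integrable on $]0,1[^n$ for all $T\subseteq S$. Since $f$ need not be differentiable up to the boundary, the integration by parts must be carried out on $[\varepsilon,1-\varepsilon]$ and one must check that the boundary contributions at $x_i=\varepsilon$ and $x_i=1-\varepsilon$ tend to $0$ as $\varepsilon\to 0^+$; the factor $x_i(1-x_i)$ in the primitive behaves like $\varepsilon$ near the endpoints, so this reduces to controlling $f$ and its intermediate derivatives near the boundary, which is exactly what integrability of the $D^Tf$ provides (via dominated convergence and the fundamental theorem of calculus applied to $D^{\{i\}}f$). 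Once the single-variable step is rigorously in place, the Fubini-type interchanges are routine and the induction goes through mechanically. Because the integration-by-parts computation is standard, I would be content to state that the result follows from repeated integration by parts of \eqref{eq:GenBI1} and omit the detailed $\varepsilon$-bookkeeping, as the authors do.
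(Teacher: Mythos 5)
Your proposal is correct and follows exactly the route the paper intends: the paper's proof is omitted with the remark that the result ``immediately follows from repeated integrations by parts of (\ref{eq:GenBI1})'', and your argument supplies precisely those details, with the key point being the choice of the primitive $-\frac12 x_i(1-x_i)$ of $x_i-\frac12$ that kills the boundary terms. The single-variable computation, the induction on $|S|$, and the final identification with $h_S$ are all accurate, so nothing further is needed.
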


\begin{remark}\label{rem:11}
\begin{enumerate}

\item[(a)] Formulas~(\ref{DiscBanzhaf}) and (\ref{eq:BetaDer}) show a strong analogy between the indexes $I_\mathrm{B}$ and $\mathcal{I}$. Indeed, $I_\mathrm{B}(f,S)$ is the expected value of the $S$-difference of $f$ with respect to the discrete uniform distribution whereas $\mathcal{I}(f,S)$ is the expected value of the $S$-derivative of $f$ with respect to a beta distribution. We will see in Theorem~\ref{thm:Labreuche} a similar interpretation of $\mathcal{I}(f,S)$ which does not require all the assumptions of Proposition~\ref{Beta1}.

\item[(b)] Propositions~\ref{prop:Proj-IB} and \ref{Beta1} reveal an analogy between least squares approximations and Taylor expansion formula. Indeed, while the $k$-degree Taylor expansion of $f$ at a given point $\mathbf{a}$ can be seen as the unique polynomial of degree at most $k$ whose derivatives at $\mathbf{a}$ coincide with the derivatives of $f$ at the same point, the best $k$th approximation of $f$ is the unique multilinear polynomial of degree at most $k$ that agrees with $f$ in all average $S$-derivatives for $|S|\leqslant k$.
\end{enumerate}
\end{remark}

We now give an alternative interpretation of $\mathcal{I}(f,S)$ as an expected value, which does not require the additional assumptions of Proposition~\ref{Beta1}. In this more general framework, we naturally replace the derivative with a difference quotient. To this extent, we introduce some further notation. As usual, we denote by $\bfe_i$ the $i$th vector of the standard basis of $\R^n$. For every $S\subseteq N$ and every $\bfh\in \I^n$, we define the \emph{$S$-shift} operator $E^S_\bfh$ on $F(\I^n)$ by
\[
E^S_\bfh f(\bfx)=f\bigg(\bfx+\sum_{j\in S}h_j \bfe_j\bigg)
\]
for every $\mathbf{x}\in\I^n$ such that $\mathbf{x}+\mathbf{h}\in\I^n$.

We also define the \emph{$S$-difference} (or \emph{discrete $S$-derivative}) operator $\Delta^S_\bfh$ on $F(\I^n)$ inductively by $\Delta^{\varnothing}_\bfh f=f$ and $\Delta^S_\bfh f=\Delta^{\{i\}}_\bfh\Delta^{S\setminus\{i\}}_\bfh f$ for $i\in S$, with
$
\Delta^{\{i\}}_\bfh f(\bfx)=E^{\{i\}}_\bfh f(\bfx)-f(\bfx).
$
Similarly, we define the \emph{$S$-difference quotient} operator $Q^S_\bfh$ on $F(\I^n)$ by $Q^{\varnothing}_\bfh f=f$ and $Q^S_\bfh f=Q^{\{i\}}_\bfh Q^{S\setminus\{i\}}_\bfh f$ for $i\in S$, with
$
Q^{\{i\}}_\bfh f(\bfx)=\frac{1}{h_i}\Delta^{\{i\}}_\bfh f(\bfx).
$

The next straightforward lemma provides a direct link between the difference operators and the shift operators. It actually shows that, for every fixed $\bfh\in\I^n$, the map $S\mapsto \Delta_{\bfh}^S$ is nothing other than the M\"obius transform of the map $S\mapsto E^S_\bfh$.

\begin{lemma}\label{lemmadelta2}
For every $f\in F(\I^n)$ and every $S\subseteq N$, we have
\begin{equation}\label{eq:Int10}
\Delta_{\bfh}^Sf(\bfx)=\sum_{T\subseteq S}(-1)^{|S|-|T|}\, E^T_\bfh f(\bfx).
\end{equation}
\end{lemma}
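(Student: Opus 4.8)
The plan is to prove the identity~(\ref{eq:Int10}) by induction on $|S|$, which is the natural route since both the $S$-difference operator $\Delta^S_\bfh$ and the composition structure of the $S$-shift operator $E^S_\bfh$ are defined inductively.

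\medskip\noindent\textbf{Base cases.} For $S=\varnothing$ the statement reads $\Delta^{\varnothing}_\bfh f(\bfx)=E^{\varnothing}_\bfh f(\bfx)=f(\bfx)$, which holds by definition (the empty sum on the right has only the term $T=\varnothing$). For $S=\{i\}$ we have $\Delta^{\{i\}}_\bfh f(\bfx)=E^{\{i\}}_\bfh f(\bfx)-f(\bfx)$, which is exactly the right-hand side since the subsets of $\{i\}$ are $\varnothing$ and $\{i\}$, with signs $(-1)^1$ and $(-1)^0$ respectively.

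\medskip\noindent\textbf{Inductive step.} Assume~(\ref{eq:Int10}) holds for all sets of size less than $|S|$, and pick any $i\in S$; write $S'=S\setminus\{i\}$. By the inductive definition, $\Delta^S_\bfh f=\Delta^{\{i\}}_\bfh\Delta^{S'}_\bfh f=E^{\{i\}}_\bfh\Delta^{S'}_\bfh f-\Delta^{S'}_\bfh f$. Applying the induction hypothesis to $\Delta^{S'}_\bfh f$ gives
\[
\Delta^S_\bfh f(\bfx)=\sum_{T\subseteq S'}(-1)^{|S'|-|T|}\Big(E^{\{i\}}_\bfh E^T_\bfh f(\bfx)-E^T_\bfh f(\bfx)\Big).
\]
The one small point to check here is that $E^{\{i\}}_\bfh E^T_\bfh f=E^{T\cup\{i\}}_\bfh f$ for $i\notin T$, which is immediate from the definition $E^S_\bfh f(\bfx)=f(\bfx+\sum_{j\in S}h_j\bfe_j)$ since the shifts in the disjoint directions $\{i\}$ and $T$ simply add. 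Substituting this and observing that as $T$ ranges over subsets of $S'$ the sets $T$ (not containing $i$) and $T\cup\{i\}$ (containing $i$) together range exactly once over all subsets of $S$, with the sign $-(-1)^{|S'|-|T|}=(-1)^{|S|-|T|}$ for the subsets containing $i$ and $(-1)^{|S'|-|T|}=(-1)^{|S|-|T|}$ for those not containing $i$ (using $|S|=|S'|+1$), we obtain precisely $\sum_{T\subseteq S}(-1)^{|S|-|T|}E^T_\bfh f(\bfx)$.

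\medskip\noindent\textbf{Anticipated obstacle.} There is essentially no hard obstacle — the result is flagged as ``straightforward'' in the text — but the one place requiring a little care is the bookkeeping in the inductive step: verifying that the index $i$ chosen does not matter (well-definedness of $\Delta^S_\bfh$, which one may take as given from the analogous classical fact, or check by symmetry of the final formula) and that the re-indexing of the sum over $T\subseteq S'$ into a sum over $T\subseteq S$ is carried out with the correct signs. Once the sign computation $(-1)^{|S|-|T|}$ is tracked through the two cases $i\in T$ and $i\notin T$, the identity follows immediately.
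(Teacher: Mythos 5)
Your induction is correct and complete; the paper states this lemma without proof (flagging it as straightforward), and induction on $|S|$ together with the observation $E^{\{i\}}_\bfh E^T_\bfh=E^{T\cup\{i\}}_\bfh$ for $i\notin T$ is exactly the natural argument. One cosmetic slip in the bookkeeping: you attribute the sign $-(-1)^{|S'|-|T|}$ to the subsets containing $i$, whereas in your displayed sum it is the terms $E^T_\bfh f$ (subsets \emph{not} containing $i$) that carry the minus sign and the terms $E^{T\cup\{i\}}_\bfh f$ that carry $+(-1)^{|S'|-|T|}$; since both cases still evaluate to $(-1)^{|S|-|U|}$ for the corresponding subset $U$ of $S$, the conclusion is unaffected.
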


Let us interpret the $S$-difference operator through a simple example. For $n=3$ and $S=\{1,2\}$, we have
$$
\Delta_{\bfh}^Sf(\bfx)=f(x_1+h_1,x_2+h_2,x_3)-f(x_1+h_1,x_2,x_3)-f(x_1,x_2+h_2,x_3)+f(x_1,x_2,x_3).
$$
In complete analogy with the discrete concept of marginal interaction among players in a coalition $S\subseteq N$ (see \cite[{\S}2]{GraMarRou00}), the value
$\Delta_{\bfh}^Sf(\bfx)$ can be interpreted as the \emph{marginal interaction} among variables $x_i$ ($i\in S$) at $\bfx$ with respect to the increases $h_i$ for $i\in S$.

Setting $\bfh=\bfy-\bfx$ in the example above, we obtain
$$
\Delta_{\bfy-\bfx}^Sf(\bfx)=f(y_1,y_2,x_3)-f(y_1,x_2,x_3)-f(x_1,y_2,x_3)+f(x_1,x_2,x_3).
$$
If $x_i\leqslant y_i$ for every $i\in S$, then $\Delta_{\bfy-\bfx}^Sf(\bfx)$ is naturally called the \emph{$f$-volume} of the box $\prod_{i\in
S}[x_i,y_i]$. The following straightforward lemma shows that, when $f=v_S$, $\Delta_{\bfy-\bfx}^Sf(\bfx)$ is exactly the volume of the box $\prod_{i\in S}[x_i,y_i]$.

\begin{lemma}\label{lemmadelta}
For every $S\subseteq N$, we have
$
\Delta^S_{\bfy-\bfx}v_S(\bfx)=\prod_{i\in S}(y_i-x_i).
$
\end{lemma}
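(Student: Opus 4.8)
The plan is to deduce the identity directly from Lemma~\ref{lemmadelta2}, which expresses $\Delta^S_{\bfh}$ in terms of the shift operators $E^T_{\bfh}$. First I would apply \eqref{eq:Int10} with $f=v_S$ and $\bfh=\bfy-\bfx$, so that
\[
\Delta^S_{\bfy-\bfx}v_S(\bfx)=\sum_{T\subseteq S}(-1)^{|S|-|T|}\, E^T_{\bfy-\bfx}v_S(\bfx).
\]
Then I would compute each shifted term: since $v_S(\bfx)=\prod_{i\in S}x_i$ and $E^T_{\bfy-\bfx}$ replaces the coordinate $x_j$ by $x_j+(y_j-x_j)=y_j$ precisely for $j\in T$, we get $E^T_{\bfy-\bfx}v_S(\bfx)=\prod_{i\in T}y_i\,\prod_{i\in S\setminus T}x_i$ (using $T\subseteq S$). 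Substituting this back yields
\[
\Delta^S_{\bfy-\bfx}v_S(\bfx)=\sum_{T\subseteq S}(-1)^{|S\setminus T|}\,\prod_{i\in T}y_i\,\prod_{i\in S\setminus T}x_i,
\]
and the right-hand side is exactly the expansion of the product $\prod_{i\in S}\bigl(y_i-x_i\bigr)$ obtained by choosing, for each $i\in S$, either the term $y_i$ or the term $-x_i$. This gives the claim.

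As an equally short alternative, I could argue by induction on $|S|$ using the inductive definition $\Delta^S_{\bfh}=\Delta^{\{i\}}_{\bfh}\Delta^{S\setminus\{i\}}_{\bfh}$. The base case $S=\varnothing$ is trivial since $v_\varnothing\equiv 1$ and $\Delta^\varnothing_{\bfh}v_\varnothing=1$ equals the empty product. For the inductive step, pick $i\in S$; one computes $\Delta^{\{i\}}_{\bfh}v_S(\bfx)=(x_i+h_i)\prod_{j\in S\setminus\{i\}}x_j-x_i\prod_{j\in S\setminus\{i\}}x_j=h_i\,v_{S\setminus\{i\}}(\bfx)$, and since $h_i$ is constant with respect to the remaining shifts, applying $\Delta^{S\setminus\{i\}}_{\bfh}$ and the induction hypothesis gives $\Delta^S_{\bfh}v_S(\bfx)=h_i\prod_{j\in S\setminus\{i\}}h_j=\prod_{j\in S}h_j$. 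Setting $\bfh=\bfy-\bfx$ finishes the proof.

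There is no real obstacle here: the only point requiring a line of care is checking that $E^T_{\bfy-\bfx}$ sends $v_S$ to $\prod_{i\in T}y_i\prod_{i\in S\setminus T}x_i$ (which uses $T\subseteq S$ so that no variable outside $S$ is touched and no variable of $S$ is shifted twice), and then recognizing the resulting alternating sum as the factored product $\prod_{i\in S}(y_i-x_i)$. I would present the first (non-inductive) version since it reuses Lemma~\ref{lemmadelta2} and keeps the argument to a couple of lines.
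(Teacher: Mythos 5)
Your proof is correct. The paper offers no argument for this lemma (it is labelled ``straightforward'' and stated without proof), so there is nothing to compare against; both of your routes --- the expansion via Lemma~\ref{lemmadelta2} followed by recognizing the alternating sum as the factorization of $\prod_{i\in S}(y_i-x_i)$, and the induction on $|S|$ --- are valid, and the only detail worth a word in the inductive version is that the single-variable difference operators commute, so applying $\Delta^{\{i\}}_{\bfh}$ first rather than last (as in the paper's inductive definition) is harmless.
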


In the remaining part of this paper, the notation $\bfy_S\in [\bfx_S,\mathbf{1}]$ means that $y_i\in [x_i,1]$ for every $i\in S$.

\begin{theorem}\label{thm:Labreuche}
For every $f\in L^2(\I^n)$ and every $S\subseteq N$, we have
\begin{equation}\label{formuLab}
\mathcal{I}(f,S) = \frac 1{\mu(S)} \int_{\bfx\in\I^n}\int_{\bfy_S\in [\bfx_S,\mathbf{1}]}\Delta^S_{\bfy-\bfx}f(\bfx)\, d\bfy_S\, d\bfx,
\end{equation}
where
$$
\mu(S)=\int_{\bfx\in\I^n}\int_{\bfy_S\in [\bfx_S,\mathbf{1}]} \Delta^S_{\bfy-\bfx}v_S(\bfx)\, d\bfy_S\, d\bfx =6^{-|S|}.
$$
\end{theorem}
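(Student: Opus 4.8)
The plan is to expand the inner $S$-difference using Lemma~\ref{lemmadelta2}, integrate term by term, and recognize the resulting sum as the expansion of $12^{|S|}\int_{\I^n}f(\bfx)\prod_{i\in S}(x_i-\tfrac12)\, d\bfx$ given in~(\ref{eq:GenBI1}). First I would compute $\mu(S)$: by Lemma~\ref{lemmadelta} the integrand $\Delta^S_{\bfy-\bfx}v_S(\bfx)$ equals $\prod_{i\in S}(y_i-x_i)$, so the double integral factors over $i\in S$ as $\prod_{i\in S}\int_0^1\!\int_{x_i}^1 (y_i-x_i)\, dy_i\, dx_i = \prod_{i\in S}\int_0^1 \tfrac12(1-x_i)^2\, dx_i = (1/6)^{|S|}$, confirming $\mu(S)=6^{-|S|}$.

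Next I would evaluate the numerator on the right-hand side of~(\ref{formuLab}). By Lemma~\ref{lemmadelta2} with $\bfh=\bfy-\bfx$,
\[
\int_{\bfx\in\I^n}\int_{\bfy_S\in[\bfx_S,\mathbf{1}]}\Delta^S_{\bfy-\bfx}f(\bfx)\, d\bfy_S\, d\bfx
= \sum_{T\subseteq S}(-1)^{|S|-|T|}\int_{\bfx\in\I^n}\int_{\bfy_S\in[\bfx_S,\mathbf{1}]} E^T_{\bfy-\bfx}f(\bfx)\, d\bfy_S\, d\bfx .
\]
In the $T$th term, the variables $y_i$ for $i\in S\setminus T$ do not appear in the integrand $f(\bfx+\sum_{j\in T}(y_j-x_j)\bfe_j)$, so integrating those out contributes a factor $\prod_{i\in S\setminus T}(1-x_i)$. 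For the remaining variables, I would perform for each $i\in T$ the substitution $z_i=x_i+(y_i-x_i)=y_i$ — more precisely, fix $\bfx$, and for $i\in T$ change variables from $y_i\in[x_i,1]$ to the corresponding coordinate of the shifted argument — turning $\int_{x_i}^1 f(\ldots y_i\ldots)\, dy_i$ into $\int_{x_i}^1 f(\ldots z_i\ldots)\, dz_i$. Then I would swap the order of integration in the $x_i$ and $z_i$ variables (for $i\in T$): the region $\{0\le x_i\le z_i\le 1\}$ gives $\int_0^1\!\int_0^{z_i} dx_i\, dz_i$, and since the integrand no longer depends on $x_i$, integrating $x_i$ over $[0,z_i]$ yields a factor $z_i$. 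After also integrating the untouched variables $x_i$, $i\in N\setminus S$, over $[0,1]$ trivially, the $T$th term becomes
\[
(-1)^{|S|-|T|}\int_{\I^n} f(\bfx)\,\Big(\prod_{i\in T}x_i\Big)\Big(\prod_{i\in S\setminus T}(1-x_i)\Big)\, d\bfx,
\]
after renaming $z_i$ back to $x_i$. Summing over $T\subseteq S$ and factoring, the sum equals $\int_{\I^n} f(\bfx)\prod_{i\in S}\big(x_i-(1-x_i)\big)\, d\bfx = \int_{\I^n} f(\bfx)\prod_{i\in S}(2x_i-1)\, d\bfx = 2^{|S|}\int_{\I^n} f(\bfx)\prod_{i\in S}(x_i-\tfrac12)\, d\bfx$. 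Dividing by $\mu(S)=6^{-|S|}$ gives $12^{|S|}\int_{\I^n} f(\bfx)\prod_{i\in S}(x_i-\tfrac12)\, d\bfx = \mathcal{I}(f,S)$ by~(\ref{eq:GenBI1}).

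The main obstacle is the bookkeeping in the change of variables and the Fubini swaps: one must justify that all interchanges of integration order are legitimate (which follows from $f\in L^2(\I^n)\subseteq L^1(\I^n)$ and boundedness of the domains, so the integrand is absolutely integrable over the product region), and one must carefully track, for each $T$, which $y$-variables are dummy (those in $S\setminus T$) versus which get absorbed into a shifted $x$-variable (those in $T$). Once the term-by-term identity $\int\!\int E^T_{\bfy-\bfx}f\, d\bfy_S\, d\bfx = \int_{\I^n} f(\bfx)\prod_{i\in T}x_i\prod_{i\in S\setminus T}(1-x_i)\, d\bfx$ is established, the rest is the elementary algebraic identity $\sum_{T\subseteq S}(-1)^{|S|-|T|}\prod_{i\in T}x_i\prod_{i\in S\setminus T}(1-x_i)=\prod_{i\in S}(2x_i-1)$, which is just the binomial expansion factored coordinatewise.
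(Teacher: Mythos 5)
Your proof is correct and follows essentially the same route as the paper's: expand $\Delta^S_{\bfy-\bfx}f$ via Lemma~\ref{lemmadelta2}, reduce each shift term to $\int_{\I^n}f(\bfx)\prod_{i\in T}x_i\prod_{i\in S\setminus T}(1-x_i)\,d\bfx$ by the same Fubini swap and renaming of $y_i$ as $x_i$ (the paper's displayed identities for these two cases), and conclude with the factorization $\sum_{T\subseteq S}(-1)^{|S|-|T|}\prod_{i\in T}x_i\prod_{i\in S\setminus T}(1-x_i)=\prod_{i\in S}(2x_i-1)$. No gaps.
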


\begin{proof}
Since the result is trivial if $S=\varnothing$, we can assume that $S\neq\varnothing$. We first observe that the value of $\mu(S)$ immediately follows from Lemma~\ref{lemmadelta}. Then, for any $T\subseteq N$ and any $i\in T$, we have
\begin{equation}\label{eq:Int11}
\int_0^1\int_{x_i}^{1}E^T_{\bfy-\bfx}f(\bfx)\, dy_i\, dx_i = \int_0^1y_i\, E^T_{\bfy-\bfx}f(\bfx)\, dy_i = \int_0^1x_i\,E^{T\setminus\{i\}}_{\bfy-\bfx}f(\bfx) \, dx_i,
\end{equation}
where the first equality is obtained by permuting the integrals and the second equality by replacing the integration variable $y_i$ with $x_i$. Moreover, we have immediately
\begin{equation}\label{eq:Int12}
\int_0^1\int_{x_i}^1f(\mathbf{x})\, dy_i\, dx_i = \int_0^1(1-x_i)\, f(\mathbf{x})\, dx_i.
\end{equation}

Using (\ref{eq:Int10}) and repeated applications of (\ref{eq:Int11}) and (\ref{eq:Int12}), we finally obtain
\begin{multline*}
\int_{\bfx\in\I^n}\int_{\bfy_S\in [\bfx_S,\mathbf{1}]}\Delta^S_{\bfy-\bfx}f(\bfx)\, d\bfy_S\, d\bfx\\
= \sum_{T\subseteq S}(-1)^{|S|-|T|}\, \int_{\bfx\in\I^n}\int_{\bfy_S\in [\bfx_S,\mathbf{1}]}E^T_{\bfy-\bfx}f(\bfx)\, d\bfy_S\, d\bfx\\
= \sum_{T\subseteq S}(-1)^{|S|-|T|}\, \int_{\I^n}\prod_{i\in T}x_i\, \prod_{i\in S\setminus T}(1-x_i)\, f(\bfx)\, d\bfx\\
= \int_{\I^n}\prod_{i\in S}(2x_i-1)\, f(\bfx)\, d\bfx ~=~ 6^{-|S|}\,\mathcal{I}(f,S),
\end{multline*}
which completes the proof.
\end{proof}

\begin{remark}\label{rem:gdfg45}
\begin{enumerate}
\item[(a)] By Lemma \ref{lemmadelta}, we see that $\mathcal{I}(f,S)$ can be interpreted as the average $f$-volume of the box $\prod_{i\in S}[x_i,y_i]$ divided by its average volume, when $\bfx$ and $\bfy_S$ are chosen at random with the uniform distribution.

\item[(b)] As already mentioned in Remark~\ref{rem:11}$(a)$, Theorem~\ref{thm:Labreuche} appears as a natural generalization of formula (\ref{DiscBanzhaf}) (similarly to Proposition \ref{Beta1}) in the sense that the marginal interaction $\Delta_{\bfh}^Sf(\bfx)$ at $\bfx$ is averaged over the whole domain $\I^n$ (instead of its vertices).

\item[(c)] We note a strong analogy between formula (\ref{formuLab}) and the overall importance index defined by Grabisch and Labreuche in \cite[Theorem~1]{GraLab01}. Indeed, up to the normalization constant, this importance index is obtained by replacing in formula (\ref{formuLab}) the operator $\Delta^S_{\bfy-\bfx}$ by $E^S_{\bfy-\bfx}-I$. Moreover, when $S$ is a singleton, both operators coincide and so do the normalization constants.
\end{enumerate}
\end{remark}

As an immediate consequence of Theorem~\ref{thm:Labreuche}, we have the following interpretation of the index $\mathcal{I}$ as an expected value of the difference quotients of its argument with respect to some probability distribution.

\begin{corollary}\label{cor:sfadfsd}
For every $f\in L^2(\I^n)$ and every $S\subseteq N$, we have
\[
\mathcal{I}(f,S) = \int_{\bfx\in\I^n}\int_{\bfy_S\in [\bfx_S,\mathbf{1}]}p_S(\bfx,\bfy_S)\, Q^S_{\bfy-\bfx}f(\bfx)\, d\bfy_S\, d\bfx,
\]
where the function $p_S(\bfx,\bfy_S)=6^{|S|}\prod_{i\in S}(y_i-x_i)$ defines a probability density function on the set $\{(\bfx,\bfy_S):\bfx\in\I^n, \bfy_S\in [\bfx_S,\boldsymbol{1}]\}$.
\end{corollary}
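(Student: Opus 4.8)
The plan is to deduce the statement directly from Theorem~\ref{thm:Labreuche} and the very definition of the difference quotient operator. First I would record the elementary identity linking $Q^S_\bfh$ and $\Delta^S_\bfh$: for each $i$ the operator $\Delta^{\{i\}}_\bfh$ is linear and acts only on the $i$th coordinate, so it commutes with multiplication by the scalars $1/h_j$ ($j\neq i$); hence an immediate induction on $|S|$ from the inductive definitions of $Q^S_\bfh$ and $\Delta^S_\bfh$ yields
\[
Q^S_\bfh f(\bfx) = \Big(\prod_{i\in S}\frac{1}{h_i}\Big)\,\Delta^S_\bfh f(\bfx).
\]
Taking $\bfh=\bfy-\bfx$ and recalling $p_S(\bfx,\bfy_S)=6^{|S|}\prod_{i\in S}(y_i-x_i)$, this gives
\[
\Delta^S_{\bfy-\bfx}f(\bfx) \;=\; \prod_{i\in S}(y_i-x_i)\; Q^S_{\bfy-\bfx}f(\bfx) \;=\; \frac{1}{6^{|S|}}\,p_S(\bfx,\bfy_S)\,Q^S_{\bfy-\bfx}f(\bfx).
\]

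Next I would substitute this into formula (\ref{formuLab}). Since $\mu(S)=6^{-|S|}$, the factor $1/\mu(S)=6^{|S|}$ in Theorem~\ref{thm:Labreuche} cancels against the $6^{-|S|}$ produced above, and one obtains at once
\[
\mathcal{I}(f,S) \;=\; \int_{\bfx\in\I^n}\int_{\bfy_S\in[\bfx_S,\mathbf{1}]} p_S(\bfx,\bfy_S)\, Q^S_{\bfy-\bfx}f(\bfx)\, d\bfy_S\, d\bfx,
\]
which is the asserted equality. It then remains to verify that $p_S$ is a probability density on the set $\{(\bfx,\bfy_S):\bfx\in\I^n,\,\bfy_S\in[\bfx_S,\mathbf{1}]\}$: nonnegativity is clear because $y_i\geqslant x_i$ there, and for the total mass I would invoke Lemma~\ref{lemmadelta} to write $\prod_{i\in S}(y_i-x_i)=\Delta^S_{\bfy-\bfx}v_S(\bfx)$, whence $\int\int p_S = 6^{|S|}\int\int\Delta^S_{\bfy-\bfx}v_S(\bfx)\,d\bfy_S\,d\bfx = 6^{|S|}\mu(S)=1$.

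There is essentially no hard step here; the only point deserving a word of care is that $Q^S_{\bfy-\bfx}f(\bfx)$ involves division by $\prod_{i\in S}(y_i-x_i)$, which vanishes on a subset of measure zero of the domain of integration. However, the integrand $p_S(\bfx,\bfy_S)\,Q^S_{\bfy-\bfx}f(\bfx)$ coincides with $6^{|S|}\Delta^S_{\bfy-\bfx}f(\bfx)$, which is defined everywhere on $\I^n$ and integrable in view of Theorem~\ref{thm:Labreuche}, so all the integrals written above are legitimate and the argument is complete.
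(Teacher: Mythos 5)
Your argument is correct and is exactly the intended one: the paper derives this corollary as an ``immediate consequence'' of Theorem~\ref{thm:Labreuche}, namely by observing that $p_S(\bfx,\bfy_S)\,Q^S_{\bfy-\bfx}f(\bfx)=6^{|S|}\Delta^S_{\bfy-\bfx}f(\bfx)=\mu(S)^{-1}\Delta^S_{\bfy-\bfx}f(\bfx)$ and that $p_S$ integrates to $6^{|S|}\mu(S)=1$ by Lemma~\ref{lemmadelta}. Your additional remark about the measure-zero set where $\prod_{i\in S}(y_i-x_i)$ vanishes is a sensible precaution that the paper leaves implicit.
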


Let us now analyze the behavior of the interaction index $\mathcal{I}$ on some special classes of functions. The following properties generalize in a very natural way to our setting the behavior of the Banzhaf interaction index $I_B$ with respect to the presence of null players and dummy coalitions.

Recall that a \emph{null player} in a game (or a set function) $v\in\mathcal{G}^N$ is a player $i\in N$ such that $v(T\cup\{i\})=v(T)$ for every $T\subseteq N\setminus\{i\}$. Equivalently, the corresponding pseudo-Boolean function $f\colon\{0,1\}^n\to\R$, given by (\ref{eq:pBfPF}), is independent of $x_i$. The notion of null player for games is then naturally extended through the notion of ineffective variables for functions in $F(\I^n)$ as follows. A variable $x_i$ $(i\in N)$ is said to be \emph{ineffective} for a function $f$ in $F(\I^n)$ if $f(\bfx)=E^{\{i\}}_{-\bfx}f(\bfx)$ for every $\bfx\in\I^n$, or equivalently, if $\Delta^{\{i\}}_{\bfy-\bfx}f(\bfx)=0$ for every $\bfx,\bfy\in\I^n$.

Define $I_f=\{i\in N : \mbox{$x_i$ ineffective for $f$}\}$. From either (\ref{eq:GenBI1}) or (\ref{formuLab}), we immediately derive the following result, which states that any combination of variables containing at least one ineffective variable for a function $f\in L^2(\I^n)$ has necessarily a zero interaction.

\begin{proposition}\label{ineffective}
For every $f\in L^2(\I^n)$ and every $S\subseteq N$ such that $S\cap I_f\neq\varnothing$, we have $\mathcal{I}(f,S) = 0$.
\end{proposition}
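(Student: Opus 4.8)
The plan is to use either of the two integral representations of $\mathcal{I}(f,S)$ already at our disposal, namely formula~(\ref{eq:GenBI1}) or formula~(\ref{formuLab}), and to exploit the fact that the index splits multiplicatively over disjoint blocks of variables once one variable is known to be ineffective. Since $S\cap I_f\neq\varnothing$, fix some $i\in S\cap I_f$ and write $S=\{i\}\cup S'$ with $i\notin S'$.

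The cleanest route goes through~(\ref{formuLab}). First I would recall that $\Delta^S_{\bfy-\bfx}f=\Delta^{\{i\}}_{\bfy-\bfx}\Delta^{S'}_{\bfy-\bfx}f$ by the inductive definition of the $S$-difference operator, and that the operators $\Delta^{\{i\}}_{\bfy-\bfx}$ and $\Delta^{S'}_{\bfy-\bfx}$ commute since they act on disjoint sets of coordinates. Next, because $x_i$ is ineffective for $f$, it is also ineffective for $\Delta^{S'}_{\bfy-\bfx}f$: the operator $\Delta^{S'}_{\bfy-\bfx}$ only touches the coordinates in $S'$, so applying it preserves the property $\Delta^{\{i\}}_{\bfy-\bfx}(\cdot)=0$. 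Hence $\Delta^S_{\bfy-\bfx}f(\bfx)=\Delta^{\{i\}}_{\bfy-\bfx}\big(\Delta^{S'}_{\bfy-\bfx}f\big)(\bfx)=0$ for every $\bfx,\bfy\in\I^n$. Plugging this identically-zero integrand into~(\ref{formuLab}) gives $\mathcal{I}(f,S)=0$.

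An even shorter alternative uses~(\ref{eq:GenBI1}) directly. The hypothesis that $x_i$ is ineffective means, via Lemma~\ref{lemmadelta2} applied with $S=\{i\}$, that $f(\bfx)=E^{\{i\}}_{-\bfx}f(\bfx)=f(\bfx\mid x_i=0)$ almost everywhere, i.e. $f$ does not depend on $x_i$. Then in~(\ref{eq:GenBI1}) one isolates the integration over $x_i$: by Fubini, the inner integral contains the factor $\int_0^1 (x_i-\tfrac12)\,dx_i=0$ multiplied by a function independent of $x_i$, so the whole integral vanishes.

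I do not anticipate a genuine obstacle here; the only point requiring a line of care is the observation that ineffectiveness of $x_i$ for $f$ is inherited by $\Delta^{S'}_{\bfy-\bfx}f$ (equivalently, that "$f$ independent of $x_i$" is preserved under operators acting on other coordinates), which is immediate from the definitions. I would state the proof using the second, more elementary argument, remarking that it follows equally well from~(\ref{formuLab}).
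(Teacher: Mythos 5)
Your argument is correct and follows exactly the paper's (unwritten) proof: the authors simply assert that the result is immediate from either~(\ref{eq:GenBI1}) or~(\ref{formuLab}), and you have filled in both routes accurately --- the Fubini argument with $\int_0^1(x_i-\tfrac12)\,dx_i=0$ for~(\ref{eq:GenBI1}), and the vanishing of $\Delta^S_{\bfy-\bfx}f$ via the factorization $\Delta^{\{i\}}_{\bfy-\bfx}\Delta^{S\setminus\{i\}}_{\bfy-\bfx}$ for~(\ref{formuLab}). No gaps; the only cosmetic remark is that the identity $E^{\{i\}}_{-\bfx}f(\bfx)=f(\bfx\mid x_i=0)$ comes directly from the definition of the shift operator rather than from Lemma~\ref{lemmadelta2}.
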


We say that a coalition $S\subseteq N$ is \emph{dummy} in a game (or a set function) $v\in\mathcal{G}^N$ if $v(R\cup T)=v(R)+v(T)-v(\varnothing)$ for every $R\subseteq S$ and every $T\subseteq N\setminus S$. This means that $\{S,N\setminus S\}$ forms a partition of $N$ such that, for every coalition $K\subseteq N$, the relative worth $v(K)-v(\varnothing)$ is the sum of the relative worths of its intersections with $S$ and $N\setminus S$. It follows that a coalition $S$ and its complement $N\setminus S$ are simultaneously dummy in any game $v\in\mathcal{G}^N$.

We propose the following extension of this concept.

\begin{definition}
We say that a subset $S\subseteq N$ is \emph{dummy} for a function $f\in F(\I^n)$ if $f(\bfx)=E_{-\bfx}^Sf(\bfx)+E_{-\bfx}^{N\setminus
S}f(\bfx)-f(\mathbf{0})$ for every $\bfx\in\I^n$.
\end{definition}

The following proposition gives an immediate interpretation of this definition.

\begin{proposition}\label{dummy}
A subset $S\subseteq N$ is dummy for a function $f\in F(\I^n)$ if and only if there exist functions $f_S,f_{N\setminus S}\in F(\I^n)$ such that $I_{f_S}\supseteq N\setminus S$, $I_{f_{N\setminus S}}\supseteq S$ and $f=f_S+f_{N\setminus S}$.
\end{proposition}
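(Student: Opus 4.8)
The plan is to establish both implications directly from the definitions of ``dummy'' and ``ineffective,'' using the fact (Lemma~\ref{lemmadelta2}) that $\Delta^S_\bfh$ is the M\"obius transform of $S\mapsto E^S_\bfh$, which will let me translate the hypothesis on $\Delta^{\{i\}}_{\bfy-\bfx}$ into a statement about shift operators and vice versa.

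For the ``if'' direction, suppose $f=f_S+f_{N\setminus S}$ with $I_{f_S}\supseteq N\setminus S$ and $I_{f_{N\setminus S}}\supseteq S$. Since every variable $x_j$ with $j\in N\setminus S$ is ineffective for $f_S$, the value $f_S(\bfx)$ depends only on the coordinates $x_i$ with $i\in S$; in particular $f_S(\bfx)=E^{N\setminus S}_{-\bfx}f_S(\bfx)$, and applying the $S$-shift by $-\bfx$ on top of that gives $E^S_{-\bfx}f_S(\bfx)=f_S(\mathbf{0})$. A symmetric computation gives $E^{N\setminus S}_{-\bfx}f_{N\setminus S}(\bfx)=f_{N\setminus S}(\mathbf{0})$ and $E^S_{-\bfx}f_{N\setminus S}(\bfx)=f_{N\setminus S}(\bfx)$. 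Now expand $E^S_{-\bfx}f(\bfx)+E^{N\setminus S}_{-\bfx}f(\bfx)$ using $f=f_S+f_{N\setminus S}$ and linearity of the shift operators, substitute the four identities just obtained, and check that the terms collapse to $f_S(\bfx)+f_{N\setminus S}(\bfx)+f_S(\mathbf{0})+f_{N\setminus S}(\mathbf{0})-\text{(something)}$; since $f(\mathbf{0})=f_S(\mathbf{0})+f_{N\setminus S}(\mathbf{0})$, this rearranges to exactly $f(\bfx)+f(\mathbf{0})$, i.e.\ $f(\bfx)=E^S_{-\bfx}f(\bfx)+E^{N\setminus S}_{-\bfx}f(\bfx)-f(\mathbf{0})$, which is the definition of $S$ being dummy for $f$.

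For the ``only if'' direction, the natural candidates are $f_{N\setminus S}(\bfx):=E^S_{-\bfx}f(\bfx)$ (which ``kills'' the $S$-coordinates by resetting them to $0$, hence should depend only on $x_j$, $j\in N\setminus S$) and $f_S(\bfx):=f(\bfx)-f_{N\setminus S}(\bfx)$. The dummy hypothesis can be rewritten as $f_S(\bfx)=f(\bfx)-E^S_{-\bfx}f(\bfx)=E^{N\setminus S}_{-\bfx}f(\bfx)-f(\mathbf{0})$, which already exhibits $f_S$ as depending only on the coordinates $x_i$, $i\in S$, giving $I_{f_S}\supseteq N\setminus S$. For $f_{N\setminus S}=E^S_{-\bfx}f(\bfx)$ one must check $I_{f_{N\setminus S}}\supseteq S$, i.e.\ that resetting the $S$-coordinates of $f$ to $0$ produces a function ineffective in every $x_i$ with $i\in S$; this is essentially immediate from the definition of $E^S_{-\bfx}$, since for $i\in S$ the composed shift $E^{\{i\}}_{\bfy-\bfx}E^S_{-\bfx}f$ again resets the $i$th coordinate to $0$ and so equals $E^S_{-\bfx}f$, making $\Delta^{\{i\}}_{\bfy-\bfx}f_{N\setminus S}=0$. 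Finally $f=f_S+f_{N\setminus S}$ holds by construction.

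The main obstacle is bookkeeping: making sure the various shift operators $E^S_{-\bfx}$, $E^{N\setminus S}_{-\bfx}$, $E^{\{i\}}_{\bfy-\bfx}$ commute and compose as expected (they act on disjoint or nested blocks of coordinates, so they do), and verifying the constant terms $f(\mathbf{0})$, $f_S(\mathbf{0})$, $f_{N\setminus S}(\mathbf{0})$ line up correctly in the ``if'' direction. No single step is deep; the care needed is entirely in tracking which coordinates each operator touches and which it leaves fixed.
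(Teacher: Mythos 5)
Your proof is correct and follows essentially the same route as the paper: for necessity you use the same decomposition (the paper sets $f_S(\bfx)=E^{N\setminus S}_{-\bfx}f(\bfx)-f(\mathbf{0})$ and $f_{N\setminus S}=f-f_S$, which by the dummy hypothesis is exactly your $f_{N\setminus S}=E^S_{-\bfx}f$), and for sufficiency you carry out the direct verification that the paper leaves to the reader.
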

\begin{proof}
 For the necessity, just set $f_S(\bfx)=E_{-\bfx}^{N\setminus
S}f(\bfx)-f(\mathbf{0})$ and $f_{N\setminus S}=f-f_S$. The sufficiency can be checked directly.
\end{proof}

The following result expresses the natural idea that the interaction for subsets that are properly partitioned by a dummy subset must be zero. It is an immediate consequence of Propositions~\ref{prop:lin56}, \ref{ineffective}, and \ref{dummy}.

\begin{proposition}
For every $f\in L^2(\I^n)$, every nonempty subset $S\subseteq N$ that is dummy for $f$, and every subset $K\subseteq N$ such that $K\cap S\not =\varnothing$ and $K\setminus S\not=\varnothing$, we have $\mathcal{I}(f,K) = 0$.
\end{proposition}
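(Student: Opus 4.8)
The plan is to combine the three cited results—linearity of the index (Proposition~\ref{prop:lin56}), the vanishing of the index on combinations containing an ineffective variable (Proposition~\ref{ineffective}), and the decomposition characterization of dummy subsets (Proposition~\ref{dummy})—in a direct way. First I would apply Proposition~\ref{dummy} to the nonempty dummy subset $S$: since $S$ is dummy for $f\in L^2(\I^n)$, there exist $f_S,f_{N\setminus S}\in F(\I^n)$ with $I_{f_S}\supseteq N\setminus S$, $I_{f_{N\setminus S}}\supseteq S$, and $f=f_S+f_{N\setminus S}$.

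Next I would need to ensure that $f_S$ and $f_{N\setminus S}$ are themselves square integrable, so that $\mathcal{I}$ is defined on them; this follows because, from the explicit formulas in the proof of Proposition~\ref{dummy}, $f_S(\bfx)=E_{-\bfx}^{N\setminus S}f(\bfx)-f(\mathbf 0)$ is obtained from $f$ by freezing the variables in $N\setminus S$ at $0$ (composing with a measurable, measure-nonincreasing map in a suitable Fubini sense) plus a constant, and $f_{N\setminus S}=f-f_S$; one should remark briefly that both lie in $L^2(\I^n)$. Then, by linearity (Proposition~\ref{prop:lin56}),
\[
\mathcal{I}(f,K)=\mathcal{I}(f_S,K)+\mathcal{I}(f_{N\setminus S},K).
\]
Now invoke the hypotheses on $K$: since $K\setminus S\neq\varnothing$, pick $i\in K\setminus S\subseteq N\setminus S\subseteq I_{f_S}$, so $K\cap I_{f_S}\neq\varnothing$ and Proposition~\ref{ineffective} gives $\mathcal{I}(f_S,K)=0$; since $K\cap S\neq\varnothing$, pick $j\in K\cap S\subseteq I_{f_{N\setminus S}}$, so $K\cap I_{f_{N\setminus S}}\neq\varnothing$ and Proposition~\ref{ineffective} gives $\mathcal{I}(f_{N\setminus S},K)=0$. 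Hence $\mathcal{I}(f,K)=0$.

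The only mild obstacle is the square-integrability bookkeeping: Proposition~\ref{dummy} is stated for $f\in F(\I^n)$, so one must justify that the decomposing functions inherit membership in $L^2(\I^n)$ before applying the $L^2$-based results \ref{prop:lin56} and \ref{ineffective}. This is routine—$f_S$ is essentially a section of $f$ together with an additive constant—but it is the one point that needs a sentence of care rather than being purely formal. Everything else is an immediate chaining of the quoted propositions, which is exactly why the authors flag this as "an immediate consequence."
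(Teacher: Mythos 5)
Your proof is correct and is precisely the argument the authors intend when they describe the result as ``an immediate consequence'' of Propositions~\ref{prop:lin56}, \ref{ineffective}, and \ref{dummy}: decompose $f=f_S+f_{N\setminus S}$ via the dummy characterization, apply linearity, and annihilate each summand by exhibiting an element of $K$ that is ineffective for it ($K\setminus S$ for $f_S$, $K\cap S$ for $f_{N\setminus S}$). Your additional observation that one must check $f_S,f_{N\setminus S}\in L^2(\I^n)$ before invoking the $L^2$-based propositions is a legitimate point of care that the paper leaves implicit, and your justification (that $f_S$ is essentially a section of $f$ plus a constant) is adequate.
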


Another immediate consequence of Proposition~\ref{ineffective} is that additive functions have zero interaction indexes for $s$-subsets with $s\geqslant 2$. This fact can be straightforwardly extended to the class of $k$-additive functions as follows.

\begin{definition}
A function $f\in L^2(\I^n)$ is said to be \emph{$k$-additive} for some $k\in\{1,\ldots,n\}$ if there exists a family of functions $\{f_R\in L^2(\I^n) : R\subseteq N,\, |R|\leqslant k\}$ satisfying $I_{f_R}\supseteq N\setminus R$ such that $f=\sum_{R}f_R$.
\end{definition}

\begin{corollary}
Let $f=\sum_{R}f_R\in L^2(\I^n)$ be a $k$-additive function and let $S\subseteq N$. We have $\mathcal{I}(f,S)=0$ if $|S|>k$ and $\mathcal{I}(f,S)=\mathcal{I}(f_S,S)$ if $|S|=k$.
\end{corollary}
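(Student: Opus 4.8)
The plan is to reduce the statement to the linearity of the index together with Proposition~\ref{ineffective}. First I would invoke Proposition~\ref{prop:lin56} to split the index along the decomposition $f=\sum_R f_R$, writing
\[
\mathcal{I}(f,S)=\sum_{R}\mathcal{I}(f_R,S),
\]
the sum running over all $R\subseteq N$ with $|R|\leqslant k$ (the index set of the family $\{f_R\}$ furnished by $k$-additivity).

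Next I would show that $\mathcal{I}(f_R,S)=0$ whenever $S\not\subseteq R$. Indeed, if $i\in S\setminus R$ then $i\in N\setminus R\subseteq I_{f_R}$, so $x_i$ is ineffective for $f_R$ and hence $S\cap I_{f_R}\neq\varnothing$; Proposition~\ref{ineffective} then yields $\mathcal{I}(f_R,S)=0$. Consequently only those terms with $S\subseteq R$ (and therefore $|S|\leqslant|R|\leqslant k$) can contribute to the sum above.

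The two assertions then follow by a simple counting argument. If $|S|>k$, no $R$ with $|R|\leqslant k$ can satisfy $S\subseteq R$, so every summand vanishes and $\mathcal{I}(f,S)=0$. If $|S|=k$, then $S\subseteq R$ together with $|R|\leqslant k=|S|$ forces $R=S$, so the sum collapses to the single term $\mathcal{I}(f_S,S)$. There is essentially no obstacle here: as the paper announces, the corollary is an immediate consequence of linearity and Proposition~\ref{ineffective}, the only point worth a moment's attention being the bookkeeping over the index set of $\{f_R\}$, which is handled by the inclusion $S\subseteq R$.
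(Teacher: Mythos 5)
Your proof is correct and follows exactly the route the paper intends (the corollary is stated there without proof as an "immediate consequence" of linearity and Proposition~\ref{ineffective}): linearity splits the index over the family $\{f_R\}$, ineffectiveness kills every term with $S\not\subseteq R$, and the cardinality constraint $|R|\leqslant k$ does the rest. Nothing is missing.
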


Let us now introduce the concept of $S$-increasing monotonicity by refining the classical concept of $n$-increasing monotonicity for functions of $n$ variables (see for instance \cite[p.~43]{Nel06}).

\begin{definition}
Let $S\subseteq N$. We say that a function $f\in F(\I^n)$ is \emph{$S$-increasing} if $\Delta^S_{\bfy-\bfx}f(\bfx)\geqslant 0$ for all $\bfx,\bfy\in \I^n$ such that $\bfx\leqslant \bfy$.
\end{definition}

The following result then follows immediately from Theorem~\ref{thm:Labreuche}.
\begin{proposition}
If $f\in L^2(\I^n)$ is $S$-increasing for some $S\subseteq N$, then ${\mathcal I}(f,S)\geqslant 0$.
\end{proposition}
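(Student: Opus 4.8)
The plan is to deduce this directly from Theorem~\ref{thm:Labreuche}. That theorem gives the representation
\[
\mathcal{I}(f,S) = \frac{1}{\mu(S)}\int_{\bfx\in\I^n}\int_{\bfy_S\in[\bfx_S,\mathbf{1}]}\Delta^S_{\bfy-\bfx}f(\bfx)\,d\bfy_S\,d\bfx,
\]
with $\mu(S)=6^{-|S|}>0$. The $S$-increasing hypothesis says precisely that the integrand $\Delta^S_{\bfy-\bfx}f(\bfx)$ is nonnegative on the region of integration, namely for all $\bfx\leqslant\bfy$ (and in the double integral above one indeed has $x_i\leqslant y_i$ for $i\in S$, while the coordinates outside $S$ are untouched, so $\Delta^S_{\bfy-\bfx}f(\bfx)$ does not depend on $y_i$ for $i\notin S$ and the comparison $\bfx\leqslant\bfy$ can be taken to hold there as well).

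First I would note that the case $S=\varnothing$ is immediate, since $\Delta^{\varnothing}_{\bfy-\bfx}f=f$ need not be nonnegative — but actually for $S=\varnothing$ the definition of $S$-increasing imposes $f(\bfx)\geqslant 0$, so the conclusion still holds; in any case $S=\varnothing$ is trivial and one can assume $S\neq\varnothing$. Then the integrand in the iterated integral is $\geqslant 0$ by hypothesis, the measure $d\bfy_S\,d\bfx$ is a positive measure, hence the integral is $\geqslant 0$, and dividing by the positive constant $\mu(S)=6^{-|S|}$ preserves the sign. This gives $\mathcal{I}(f,S)\geqslant 0$.

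The only point requiring a word of care — and the closest thing to an obstacle — is making sure the sign hypothesis applies on the exact domain appearing in Theorem~\ref{thm:Labreuche}: there $\bfy_S$ ranges over $[\bfx_S,\mathbf{1}]$ and the remaining coordinates of $\bfy$ are irrelevant to $\Delta^S_{\bfy-\bfx}f(\bfx)$. Since $\Delta^S_{\bfy-\bfx}f(\bfx)$ depends only on $x_i$ for $i\in N$ and on $y_i$ for $i\in S$, and since for those indices we have $x_i\leqslant y_i$, we may extend $\bfy$ arbitrarily on $N\setminus S$ to a point with $\bfx\leqslant\bfy$, so that the defining inequality $\Delta^S_{\bfy-\bfx}f(\bfx)\geqslant 0$ of $S$-increasingness is exactly what is needed. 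Thus no genuine difficulty arises, and the proof is a one-line application of Theorem~\ref{thm:Labreuche} together with positivity of the integral and of $\mu(S)$.
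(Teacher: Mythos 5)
Your proof is correct and is exactly the argument the paper intends: the paper states that the proposition ``follows immediately from Theorem~\ref{thm:Labreuche},'' and your application of that integral representation, together with the nonnegativity of the integrand on the domain $\bfy_S\in[\bfx_S,\mathbf{1}]$ and the positivity of $\mu(S)=6^{-|S|}$, is precisely that immediate deduction. Your careful remark that $\Delta^S_{\bfy-\bfx}f(\bfx)$ depends on $\bfy$ only through the coordinates in $S$ (so the hypothesis $\bfx\leqslant\bfy$ can be arranged) is a worthwhile detail the paper leaves implicit.
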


We end this section by analyzing the behavior of the index ${\mathcal I}$ with respect to dualization, which is a standard concept for instance in aggregation function theory (see \cite[p.~48]{GraMarMesPap09}). The \emph{dual} of a function $f\in F(\I^n)$ is the function $f^d\in F(\I^n)$ defined by $f^d(\bfx)=1- f(\boldsymbol{1}_N-\bfx)$. A function $f\in F(\I^n)$ is said to be \emph{self-dual} if $f^d=f$. By using the change of variables theorem, we immediately derive the following result.

\begin{proposition}\label{prop:jkj945}
For every $f\in L^2(\I^n)$ and every nonempty $S\subseteq N$, we have $\mathcal{I}(f^d,S)=(-1)^{|S|+1}\mathcal{I}(f,S)$. Moreover, $\mathcal{I}(f^d,\varnothing)=1-\mathcal{I}(f,\varnothing)$. In particular, if $f$ is self-dual, then $\mathcal{I}(f,\varnothing)=1/2$ and $\mathcal{I}(f,S)=0$ whenever $|S|$ is even.
\end{proposition}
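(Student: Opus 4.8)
The plan is to compute $\mathcal{I}(f^d,S)$ directly from the integral formula~(\ref{eq:GenBI1}) and reduce everything to a change of variables. First I would write
$$
\mathcal{I}(f^d,S)= 12^{|S|}\int_{\I^n}\big(1-f(\boldsymbol{1}_N-\bfx)\big)\,\prod_{i\in S}\Big(x_i-\tfrac 12\Big)\, d\bfx,
$$
and split the integral into the constant part, involving $\int_{\I^n}\prod_{i\in S}(x_i-\tfrac12)\,d\bfx$, and the part involving $f$. For the constant part, since $S\neq\varnothing$ the integral $\int_0^1(x_i-\tfrac12)\,dx_i$ vanishes for any $i\in S$, so this term contributes $0$ to $\mathcal{I}(f^d,S)$ when $S\neq\varnothing$ (and contributes $1$ when $S=\varnothing$, giving the stated formula $\mathcal{I}(f^d,\varnothing)=1-\mathcal{I}(f,\varnothing)$ once combined with $\mathcal{I}(f,\varnothing)=\int_{\I^n}f$).

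Next I would handle the $f$-part by the substitution $\bfx\mapsto\boldsymbol{1}_N-\bfx$, whose Jacobian is $\pm1$ in absolute value and which maps $\I^n$ onto itself. Under this change of variables each factor $x_i-\tfrac12$ becomes $(1-x_i)-\tfrac12=-(x_i-\tfrac12)$, so $\prod_{i\in S}(x_i-\tfrac12)$ picks up a global sign $(-1)^{|S|}$, while $f(\boldsymbol{1}_N-\bfx)$ becomes $f(\bfx)$. Hence
$$
-\,12^{|S|}\int_{\I^n}f(\boldsymbol{1}_N-\bfx)\,\prod_{i\in S}\Big(x_i-\tfrac12\Big)\, d\bfx
= (-1)^{|S|+1}\,12^{|S|}\int_{\I^n}f(\bfx)\,\prod_{i\in S}\Big(x_i-\tfrac12\Big)\, d\bfx
= (-1)^{|S|+1}\,\mathcal{I}(f,S),
$$
which is the first claimed identity.

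The particular cases then follow immediately: if $f$ is self-dual then $f^d=f$ gives $\mathcal{I}(f,\varnothing)=1-\mathcal{I}(f,\varnothing)$, i.e. $\mathcal{I}(f,\varnothing)=1/2$, and $\mathcal{I}(f,S)=(-1)^{|S|+1}\mathcal{I}(f,S)$ for $S\neq\varnothing$, forcing $\mathcal{I}(f,S)=0$ whenever $(-1)^{|S|+1}=-1$, that is, whenever $|S|$ is even. There is no real obstacle here; the only minor point to state carefully is that the change of variables $\bfx\mapsto\boldsymbol{1}_N-\bfx$ is a measure-preserving bijection of $\I^n$ and that $f^d\in L^2(\I^n)$ whenever $f\in L^2(\I^n)$, so that all the integrals are well defined. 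The separation of the $S=\varnothing$ and $S\neq\varnothing$ cases (because of the vanishing of $\int_0^1(x_i-\tfrac12)\,dx_i$) is the one spot where a line of justification is needed rather than pure symmetry.
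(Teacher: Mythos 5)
Your proof is correct and follows exactly the route the paper intends: the paper gives no written proof beyond the remark that the result follows from the change of variables theorem, and your argument (splitting off the constant part of $f^d$, noting $\int_0^1(x_i-\tfrac12)\,dx_i=0$ for $S\neq\varnothing$, and applying the measure-preserving substitution $\bfx\mapsto\boldsymbol{1}_N-\bfx$ to pick up the sign $(-1)^{|S|}$) is precisely that computation carried out in detail. Nothing is missing.
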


\begin{remark}
Given $f\in L^2(\I^n)$, we define the \emph{self-dual} and \emph{anti-self-dual} parts of $f$ by $f^s=(f+f^d)/2$ and $f^a=(f-f^d)/2$, respectively. It follows from Proposition~\ref{prop:jkj945} that, for every nonempty $S\subseteq N$, we have $\mathcal{I}(f,S)=\mathcal{I}(f^a,S)$ if $|S|$ is even, and $\mathcal{I}(f,S)=\mathcal{I}(f^s,S)$ if $|S|$ is odd.
\end{remark}

\section{Applications to aggregation function theory}

When we need to summarize, fuse, or merge a set of values into a single one, we usually make use of a so-called aggregation function, e.g., a mean or an averaging function. Various aggregation functions have been proposed thus far in the literature, thus giving rise to the growing theory of aggregation which proposes, analyzes, and characterizes aggregation function classes. For recent references, see Beliakov et al.~\cite{BelPraCal07} and Grabisch et al.~\cite{GraMarMesPap09}.

In this context it is often useful to analyze the general behavior of a given aggregation function $f$ with respect its variables. The index $\mathcal{I}$ then offers a good solution to the problems of (i) determining which variables have the greatest influence over $f$ and (ii) measuring how the variables interact within $f$.

In this section we first compute explicit expressions of the interaction index for the discrete Choquet integral, a noteworthy aggregation function which has been widely investigated due to its many applications for instance in decision making (see for instance \cite{GraMurSug00}). Then we proceed similarly for the class of pseudo-multilinear polynomials, which includes the multiplicative functions and, in particular, the weighted geometric means. Finally, we introduce a normalized version of the index to compare interactions from different functions and compute the coefficient of determination of the multilinear approximations.


\subsection{Discrete Choquet integrals}

A \emph{discrete Choquet integral} is a function $f\in F(\I^n)$ of the form
\begin{equation}\label{Lovasz}
f(\bfx)=\sum_{T\subseteq N} a(T)\,\min_{i\in T}x_i,
\end{equation}
where the set function $a\colon 2^N\to\R$ is nondecreasing with respect to set inclusion and such that $a(\varnothing)=0$ and $\sum_{S\subseteq N}a(S)=1$.\footnote{Whether the conditions on the set function $a$ are assumed or not, the function given in (\ref{Lovasz}) is also called the \emph{Lov\'asz extension} of the pseudo-Boolean function $f|_{\{0,1\}^n}$.} For general background, see for instance \cite[Section~5.4]{GraMarMesPap09}.

The following proposition yields an explicit expression of the interaction index for the class of discrete Choquet integrals. We first consider a lemma and recall that the \emph{beta function} is defined, for any integers $p,q>0$, by
$$
B(p,q)=\int_0^1t^{p-1}(1-t)^{q-1}\, dt = \frac{(p-1)!(q-1)!}{(p+q-1)!}\, .
$$

\begin{lemma}
We have
$$
\int_{[0,1]^n}\min_{i\in T}x_i\prod_{i\in S}\Big(x_i-\frac 12\Big)\, d\bfx =
\begin{cases}
2^{-|S|} \, B(|S|+1,|T|+1), & \mbox{if $S\subseteq T$},\\
0, & \mbox{otherwise}.
\end{cases}
$$
\end{lemma}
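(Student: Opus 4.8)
The plan is to evaluate the integral
$$
\int_{[0,1]^n}\min_{i\in T}x_i\,\prod_{i\in S}\Big(x_i-\tfrac 12\Big)\, d\bfx
$$
directly by separating the variables into three groups: those in $S$ (which necessarily lie in $T$ in the nontrivial case), those in $T\setminus S$, and those in $N\setminus T$. First I would dispose of the variables in $N\setminus T$: since the integrand does not depend on $x_i$ for $i\notin T$, each such variable contributes a factor $\int_0^1 dx_i=1$, so the integral reduces to one over $[0,1]^{|T|}$ in the variables indexed by $T$. Next I would treat the case $S\not\subseteq T$: pick $i\in S\setminus T$; then $x_i$ appears in the product $\prod_{i\in S}(x_i-\tfrac 12)$ but not in $\min_{i\in T}x_i$, so the integral factors and contains $\int_0^1(x_i-\tfrac 12)\,dx_i=0$, giving the value $0$ as claimed.

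It remains to handle the case $S\subseteq T$. Here I would write $T\setminus S=\{j_1,\dots,j_m\}$ with $m=|T|-|S|$ and $S=\{i_1,\dots,i_s\}$ with $s=|S|$, and reduce to computing
$$
J=\int_{[0,1]^{|T|}}\Big(\min_{i\in T}x_i\Big)\prod_{i\in S}\Big(x_i-\tfrac 12\Big)\,d\bfx_T.
$$
The natural device is to use the layer-cake identity $\min_{i\in T}x_i=\int_0^1 \mathbf{1}\{x_i>t \text{ for all } i\in T\}\,dt$, i.e.\ $\min_{i\in T}x_i=\int_0^1 \prod_{i\in T}\mathbf{1}\{x_i>t\}\,dt$. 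Substituting and interchanging the order of integration (justified by nonnegativity after bounding, or simply by Fubini since everything is bounded on a finite-measure space) turns $J$ into
$$
J=\int_0^1\Big(\prod_{i\in S}\int_t^1\big(x_i-\tfrac 12\big)\,dx_i\Big)\Big(\prod_{i\in T\setminus S}\int_t^1 dx_i\Big)\,dt
=\int_0^1\Big(\tfrac 12\big(t-\tfrac 12\big)^2 - \tfrac 18 + \tfrac 12\big(\tfrac 12-t\big)\cdot 0 \Big)^{?}\cdots
$$
— more carefully, $\int_t^1(x-\tfrac 12)\,dx=\tfrac 12\big((1-\tfrac 12)^2-(t-\tfrac 12)^2\big)=\tfrac 12 t(1-t)$ after simplification, and $\int_t^1 dx=1-t$. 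Hence
$$
J=\int_0^1\Big(\tfrac 12\,t(1-t)\Big)^{s}(1-t)^{m}\,dt
=\Big(\tfrac 12\Big)^{s}\int_0^1 t^{s}(1-t)^{s+m}\,dt
=2^{-|S|}\,B(|S|+1,|T|+1),
$$
using $s+m=|T|$ and the definition of the beta function with $p=s+1$, $q=s+m+1=|T|+1$. This is exactly the claimed value.

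The only real subtlety — and the step I would write out with some care — is the identity $\int_t^1(x-\tfrac 12)\,dx=\tfrac 12\,t(1-t)$, which is the arithmetic that produces the factor $t^{|S|}$ needed to match the first beta-function argument $|S|+1$; a sign slip here would be easy to make. Everything else (the vanishing case, the elimination of $N\setminus T$ variables, and Fubini) is routine. I would present the proof by first stating that the case $S\not\subseteq T$ is immediate from the factor $\int_0^1(x_i-\tfrac 12)\,dx_i=0$, then devoting the bulk of the argument to the layer-cake computation above for $S\subseteq T$.
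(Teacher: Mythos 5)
Your proof is correct, and it takes a genuinely different route from the paper's. The paper decomposes the minimum by conditioning on \emph{which} variable achieves it, writing $\min_{i\in T}x_i=\sum_{j\in T}x_j\prod_{i\in T\setminus\{j\}}H(x_i-x_j)$; this produces a sum of one-dimensional integrals split into the cases $j\in S$ and $j\in T\setminus S$, which is then recognized as $-2^{-|S|}\int_0^1 x\,\frac{d}{dx}\big((1-x)^{|T|}x^{|S|}\big)\,dx$ and finished by integration by parts. You instead use the layer-cake identity $\min_{i\in T}x_i=\int_0^1\prod_{i\in T}\mathbf{1}\{x_i>t\}\,dt$, after which Fubini factors the inner integral completely and the whole computation collapses to the single beta integral $2^{-|S|}\int_0^1 t^{|S|}(1-t)^{|T|}\,dt$ --- no case split over $j$ and no integration by parts. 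The key one-dimensional computation $\int_t^1(x-\tfrac12)\,dx=\tfrac12\,t(1-t)$ is the same in both arguments (it is where the paper's factor $\big(\tfrac{x_j(1-x_j)}{2}\big)^{|S|-1}$ comes from), and you verify it correctly. Your treatment of the degenerate cases (variables outside $T\cup S$ integrating to $1$, and the vanishing when $S\not\subseteq T$ via $\int_0^1(x_i-\tfrac12)\,dx_i=0$) matches the paper's one-line dismissal. The only blemish is the garbled intermediate display (the expression ending in ``$\cdots$'') before you redo the calculation ``more carefully''; in a final write-up you should simply delete that false start, since the corrected version that follows is complete and right. On balance your argument is the cleaner of the two.
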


\begin{proof}
The result is trivial if $S\nsubseteq T$. Thus, we assume that $S\subseteq T$. Assume also without loss of generality that $T\neq\varnothing$.

For distinct real numbers $x_1,\ldots,x_n$, we have
$$
\min_{i\in T}x_i = \sum_{j\in T}x_j\prod_{i\in T\setminus\{j\}}H(x_i-x_j),
$$
where $H\colon\R\to\R$ is the Heaviside step function ($H(x)=1$ if $x\geqslant 0$ and $0$ otherwise).

Therefore, we have
\begin{multline*}
\int_{[0,1]^n}\min_{i\in T}x_i\prod_{i\in S}\Big(x_i-\frac 12\Big)\, d\bfx
= \int_{[0,1]^{|T|}}\min_{i\in T}x_i\prod_{i\in S}\Big(x_i-\frac 12\Big)\, \prod_{i\in T}dx_i\\
= \sum_{j\in T} \int_0^1\bigg(\int_{[x_j,1]^{|T|-1}}\prod_{i\in S}\Big(x_i-\frac 12\Big)\prod_{i\in T\setminus\{j\}}dx_i\bigg)\, x_j\, dx_j\\
= \sum_{j\in S}\int_0^1\Big(x_j-\frac 12\Big)\Big(\frac{x_j(1-x_j)}{2}\Big)^{|S|-1}(1-x_j)^{|T|-|S|} x_j\, dx_j\\
+ \sum_{j\in T\setminus S}\int_0^1\Big(\frac{x_j(1-x_j)}{2}\Big)^{|S|}(1-x_j)^{|T|-|S|-1}\, x_j\, dx_j\\
= -2^{-|S|}\int_0^1 x_j\,\frac{d}{dx_j}\Big((1-x_j)^{|T|}x_j^{|S|}\Big)\, dx_j.
\end{multline*}
We then conclude by calculating this latter integral by parts.
\end{proof}

\begin{proposition}\label{Lovasz1}
If $f\in F(\I^n)$ is of the form (\ref{Lovasz}), then we have
$$
\mathcal{I}(f,S)=6^{|S|}\sum_{T\supseteq S}a(T)\, B(|S|+1,|T|+1).
$$
\end{proposition}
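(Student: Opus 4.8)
The plan is to combine the linearity of the index $\mathcal{I}$ (Proposition~\ref{prop:lin56}) with the preceding lemma. Since a discrete Choquet integral of the form (\ref{Lovasz}) is a finite linear combination $f = \sum_{T\subseteq N} a(T)\, g_T$, where $g_T(\bfx) = \min_{i\in T} x_i$, linearity of $f\mapsto\mathcal{I}(f,S)$ immediately gives
\[
\mathcal{I}(f,S) = \sum_{T\subseteq N} a(T)\,\mathcal{I}(g_T,S).
\]
So the task reduces to computing $\mathcal{I}(g_T,S)$ for each $T$, which is exactly what the lemma delivers once we plug into the defining formula (\ref{eq:GenBI1}).

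Concretely, the next step is to apply (\ref{eq:GenBI1}) with $f$ replaced by $g_T$:
\[
\mathcal{I}(g_T,S) = 12^{|S|}\int_{\I^n}\min_{i\in T}x_i\,\prod_{i\in S}\Big(x_i-\tfrac12\Big)\,d\bfx.
\]
By the lemma, this integral equals $2^{-|S|}B(|S|+1,|T|+1)$ when $S\subseteq T$ and $0$ otherwise. Hence $\mathcal{I}(g_T,S) = 12^{|S|}2^{-|S|}B(|S|+1,|T|+1) = 6^{|S|}B(|S|+1,|T|+1)$ for $S\subseteq T$, and $0$ for $S\not\subseteq T$. Substituting back into the linear combination and discarding the vanishing terms leaves precisely the sum over $T\supseteq S$, which gives the claimed formula.

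A small technical point to address is that the constant $a(\varnothing)=0$ (and the normalization $\sum_S a(S)=1$) imposed on the Choquet integral plays no role here: the computation goes through for any real set function $a$, so in fact the formula holds for arbitrary Lovász extensions, as the footnote to (\ref{Lovasz}) already hints. I would simply note that the monotonicity and normalization hypotheses are not needed for this identity.

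There is essentially no obstacle here: the proof is a one-line application of linearity followed by a substitution of the lemma. All the real work — the Heaviside-function expansion of $\min_{i\in T}x_i$, the iterated integration over the simplex-like region, and the final integration by parts — has been absorbed into the lemma, so the proof of the proposition itself is routine. If anything, the only thing worth double-checking is the bookkeeping of the exponents of $2$ (the $12^{|S|}$ from the index definition against the $2^{-|S|}$ in the lemma), which cleanly produces the factor $6^{|S|}$.
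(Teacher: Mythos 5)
Your proof is correct and is exactly the argument the paper intends: the paper states the lemma precisely so that the proposition follows by linearity of $\mathcal{I}$ (Proposition~\ref{prop:lin56}) together with the definition (\ref{eq:GenBI1}), and your exponent bookkeeping $12^{|S|}\cdot 2^{-|S|}=6^{|S|}$ matches. The only cosmetic caveat is that the hypothesis $a(\varnothing)=0$ is not entirely idle, since it lets one ignore the ill-defined term $\min_{i\in\varnothing}x_i$; otherwise your remark that monotonicity and normalization are not needed is accurate.
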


\begin{remark}
The map $a\mapsto \mathcal{I}(f,S)=6^{|S|}\sum_{T\supseteq S}a(T)\, B(|S|+1,|T|+1)$ defines an interaction index, in the sense of \cite{FujKojMar06}, that is not a probabilistic index (see \cite[Section~3.3]{FujKojMar06}).
However, if we normalize this interaction index (with respect to $|S|$) to get a probabilistic index, we actually divide $\mathcal{I}(f,S)$ by $6^{|S|}B(|S|+1,|S|+1)$ and retrieve the index $I_M$ defined in \cite{MarMat08}.
\end{remark}

\subsection{Pseudo-multilinear polynomials}

We now derive an explicit expression of the index $\mathcal{I}$ for the class of pseudo-multilinear polynomials, that is, the class of multilinear polynomials with transformed variables.

\begin{definition}
We say that a function $f\in L^2(\I^n)$ is a \emph{pseudo-multilinear polynomial} if there exists a multilinear polynomial $g\in F(\R^n)$ and $n$ unary functions $\varphi_1,\ldots,\varphi_n\in L^2(\I)$ such that $f(\bfx)=g(\varphi_1(x_1),\ldots,\varphi_n(x_n))$ for every $\bfx=(x_1,\ldots,x_n)\in\I^n$.
\end{definition}

Using expression (\ref{eq:GinVK}) of multilinear polynomials, we immediately see that any pseudo-multilinear polynomial $f\in L^2(\I^n)$ can be written in the form
\[
f(\bfx)=\sum_{T\subseteq N}a(T)\prod_{i\in T}\varphi_i(x_i).
\]

The following result yields an explicit expression of the interaction index for this function in terms of the interaction indexes for the unary functions $\varphi_1,\ldots,\varphi_n$.

\begin{proposition}\label{prop:dg45456}
For every pseudo-multilinear polynomial $f\in L^2(\I^n)$ and every $S\subseteq N$, we have
\[
\mathcal{I}(f,S)=\sum_{T\supseteq S}a(T)\prod_{i\in T\setminus S}\mathcal{I}(\varphi_i,\varnothing)\prod_{i\in S}\mathcal{I}(\varphi_i,\{i\}).
\]
\end{proposition}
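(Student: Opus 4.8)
The plan is to exploit the multiplicative structure of a pseudo-multilinear polynomial together with the fact that the interaction index is defined by an integral (formula~(\ref{eq:GenBI1})) whose kernel $\prod_{i\in S}(x_i-\frac12)$ factorizes over coordinates. First I would use linearity of $\mathcal{I}$ in its first argument (Proposition~\ref{prop:lin56}) to reduce the claim to a single term $f(\bfx)=a(T)\prod_{i\in T}\varphi_i(x_i)$, so that it suffices to show
\[
\mathcal{I}\Big(\prod_{i\in T}\varphi_i(x_i),S\Big)=\prod_{i\in T\setminus S}\mathcal{I}(\varphi_i,\varnothing)\prod_{i\in S}\mathcal{I}(\varphi_i,\{i\})
\]
when $S\subseteq T$, and that the left-hand side vanishes when $S\not\subseteq T$.

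Next I would plug this product into~(\ref{eq:GenBI1}): by Fubini's theorem the $n$-fold integral factors as a product of one-dimensional integrals, one for each $i\in N$. For $i\in T$ we get $12^{1/2}$ or $12^0$ times $\int_0^1\varphi_i(x_i)(x_i-\frac12)^{\varepsilon_i}\,dx_i$ with $\varepsilon_i=1$ if $i\in S$ and $\varepsilon_i=0$ if $i\in T\setminus S$; for $i\in S\setminus T$ (the source of the vanishing) the factor is $\int_0^1(x_i-\frac12)\,dx_i=0$; and for $i\notin S\cup T$ the factor is simply $\int_0^1 dx_i=1$. Recognizing $12^{1/2}\int_0^1\varphi_i(x_i)(x_i-\frac12)\,dx_i=12\int_0^1\varphi_i(x_i)(x_i-\frac12)\,dx_i\cdot 12^{-1/2}$ — more precisely matching the one-variable version of~(\ref{eq:GenBI1}) — gives $\mathcal{I}(\varphi_i,\{i\})$ for $i\in S$ and $\mathcal{I}(\varphi_i,\varnothing)=\int_0^1\varphi_i$ for $i\in T\setminus S$. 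Collecting the $12^{|S|/2}$ prefactor of $\mathcal{I}(f,S)$ and noting it is absorbed exactly into the $|S|$ factors $12^{1/2}$ coming from the coordinates $i\in S$, the product telescopes into the claimed formula; summing back over $T$ (with the convention that terms with $S\not\subseteq T$ contribute $0$) yields the statement.

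The only mildly delicate point is bookkeeping of the normalizing powers of $12$: one must check that the single $12^{|S|}$ in front of the integral in~(\ref{eq:GenBI1}) distributes as one factor $12$ per coordinate $i\in S$, matching the definition $\mathcal{I}(\varphi_i,\{i\})=12\int_0^1\varphi_i(x_i)(x_i-\frac12)\,dx_i$, while coordinates in $T\setminus S$ receive no such factor, consistent with $\mathcal{I}(\varphi_i,\varnothing)=\int_0^1\varphi_i$. There is no real obstacle here — everything is an application of Fubini plus the observation that $\int_0^1(x-\frac12)\,dx=0$ kills any variable that is ``required'' by $S$ but absent from the monomial $T$; the proof is therefore essentially a direct computation and I would present it in a few lines.
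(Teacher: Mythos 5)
Your proposal is correct and follows essentially the same route as the paper: reduce by linearity to a single monomial $\prod_{i\in T}\varphi_i$, observe the index vanishes when $S\not\subseteq T$ (the paper cites Proposition~\ref{ineffective}, you compute $\int_0^1(x_i-\frac12)\,dx_i=0$ directly, which amounts to the same thing), and otherwise factor the integral in~(\ref{eq:GenBI1}) coordinate by coordinate. The bookkeeping of the $12^{|S|}$ factor is handled correctly.
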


\begin{proof}
By linearity of $\mathcal{I}$, we only have to compute $\mathcal{I}(\prod_{i\in T}\varphi_i,S)$. It is zero if $S\not\subseteq T$ by Proposition \ref{ineffective}. If $S\subseteq T$, we simply use (\ref{eq:GenBI1}) and compute the integrals separately.
\end{proof}

\begin{remark}
Proposition~\ref{prop:dg45456} can actually be easily extended to functions of the form
$$
f(\bfx)=\sum_{T\subseteq N}a(T)\prod_{i\in T}\varphi^T_i(x_i),
$$
where $\varphi^T_i\in L^2(\I)$ for $i=1,\ldots,n$ and $T\subseteq N$.
\end{remark}

An interesting subclass of pseudo-multilinear polynomials is the class of multiplicative functions, that is, functions of the form $f(\bfx)=\prod_{i=1}^n \varphi_i(x_i)$, where $\varphi_1,\ldots,\varphi_n\in L^2(\I)$. For every multiplicative function $f\in L^2(\I^n)$ and every $S\subseteq N$, assuming $\mathcal{I}(f,\varnothing)\not=0$, the ratio $\mathcal{I}(f,S)/\mathcal{I}(f,\varnothing)$ is also multiplicative in the sense that
\begin{equation}\label{eq:gekelk4}
\frac{\mathcal{I}(f,S)}{\mathcal{I}(f,\varnothing)}=\prod_{i\in S} \frac{\mathcal{I}(\varphi_i,\{i\})}{\mathcal{I}(\varphi_i,\varnothing)}.
\end{equation}
Combining this with (\ref{eq:Approx}) and the identity $\sum_{T\subseteq N}\prod_{i\in T} z_i=\prod_{i\in N}(1+z_i)$, we can write the best
$n$th approximation of $f$ as
$$
f_n(\bfx)=\mathcal{I}(f,\varnothing) \, \prod_{i\in
N}\Big(1+\frac{\mathcal{I}(\varphi_i,\{i\})}{\mathcal{I}(\varphi_i,\varnothing)}\,\big(x_i-\frac 12\big)\Big).
$$

\subsection{Normalized index and coefficients of determination}

Just as for interaction indexes introduced in game theory \cite{GraRou99} and the importance index defined by Grabisch and Labreuche \cite{GraLab01}, the interaction index $\mathcal{I}$ is a linear map. This implies that it cannot be considered as an absolute interaction index but rather as a relative index constructed to assess and compare interactions for a \emph{given} function.

If we want to compare interactions for \emph{different} functions, we need to consider an absolute (normalized) interaction index. Such an index is actually easy to define if we use the following probabilistic viewpoint: considering the unit cube $\I^n$ as a probability space with respect to the Lebesgue measure, we see that, for a nonempty subset $S\subseteq N$, the index $\mathcal{I}(f,S)$ is actually the covariance of the random variables $f$ and $12^{|S|/2}w_S$. It is then natural to consider the Pearson (or correlation) coefficient instead of the covariance.

\begin{definition}
The \emph{normalized interaction index} is the mapping $$r\colon \{f\in L^2(\I^n):\sigma(f)\neq 0\}\times (2^N\setminus\{\varnothing\})\to\R$$ defined by
$$
r(f,S)=\frac{\mathcal{I}(f,S)}{12^{|S|/2}\,\sigma(f)}=\Big\langle\frac{f-E(f)}{\sigma(f)}\, ,w_S\Big\rangle\, ,
$$
where $E(f)$ and $\sigma(f)$ are the expectation and the standard deviation of $f$, respectively, when $f$ is regarded as a random variable.
\end{definition}

From this definition it follows that $-1\leqslant r(f,S)\leqslant 1$. Moreover, this index remains unchanged under interval scale transformations, that is, $r(af+b,S)=r(f,S)$ for all $a>0$ and $b\in\R$. Note also that the normalized indexes for a function $f\in L^2(\I^n)$ and its dual $f^d$ are linked by $r(f^d,S)=(-1)^{|S|+1}\, r(f,S)$, where $S\neq\varnothing$.

Let us examine on a few examples the behavior of the normalized importance index $r(f,\{i\})$ of a variable $x_i$:
\begin{itemize}
\item For the arithmetic mean $f(\bfx)=\frac{1}{n}\sum_{i=1}^nx_i$, we have $\sigma(f)=(12 n)^{1/2}$, $\mathcal{I}(f,\{i\})=1/n$ for all $i\in N$, and hence $r(f,\{i\})=1/\sqrt{n}$.

\item For the minimum function $f(\bfx)=\min_{i\in N}x_i$, we have
$$
\sigma(f)=\frac{\sqrt{n}}{(n+1)\sqrt{n+2}}
$$
(see \cite[Lemma 6]{MarMat08}). By Proposition \ref{Lovasz1}, we then have
$$
r(f,\{i\})=\frac{\sqrt{3}}{\sqrt{n (n+2)}}
$$
for every $i\in N$.
By duality, the same result holds for the maximum function $f^d(\bfx)=\max_{i\in N}x_i$. From this fact, we measure the intuitive fact that the overall importance of a given variable is greater in the arithmetic mean than in the minimum and the maximum functions.

\item Consider the weighted geometric mean $f(\bfx)=\prod_{i=1}^n x_i^{c_i}$, where $c_1,\ldots,c_n\geqslant 0$ and $\sum_{i=1}^nc_i=1$. Using (\ref{eq:gekelk4}), for every nonempty subset $S\subseteq N$, we have
$$
\mathcal{I}(f,S)=\prod_{i\in N}\frac{1}{c_i+1}\prod_{i\in S}\frac{6 c_i}{c_i+2}.
$$
In the special case of the symmetric geometric mean function, we have
$$
r(f,\{i\})=\frac{\sqrt{3}}{2n+1}\Big(\Big(\frac{(n+1)^2}{n(n+2)}\Big)^n-1\Big)^{-1/2}.
$$
Here again, we can show that the importance of variables in the arithmetic mean is greater than the importance of variables in the geometric mean function.
\end{itemize}

The normalized index is also useful to compute the \emph{coefficient of determination} of the best $k$th approximation of $f$.\footnote{This coefficient actually measures the \emph{goodness of fit} of the multilinear model.} Assuming that $\sigma(f)\neq 0$, this coefficient is given by
$$
R^2_k(f)=\frac{\sigma^2(f_k)}{\sigma^2(f)}.
$$
Since $E(f_k)=\mathcal{I}(f_k,\varnothing)=\mathcal{I}(f,\varnothing)=E(f)$ (see Proposition~\ref{prop:Proj-IB}), by (\ref{eq:akS2}), we obtain
$$
\sigma^2(f_k)=\|f_k-E(f_k)\|^2=\bigg\|\sum_{\textstyle{T\subseteq N\atop 1\leqslant |T|\leqslant k}}\langle f,w_T\rangle\, w_T\bigg\|^2=\sum_{\textstyle{T\subseteq N\atop 1\leqslant |T|\leqslant k}}\langle f,w_T\rangle^2
$$
and hence
\begin{equation}\label{eq:dfs4df56}
R^2_k(f)=\sum_{\textstyle{T\subseteq N\atop 1\leqslant |T|\leqslant k}}r(f,T)^2.
\end{equation}

\begin{remark}
The coefficient of determination explains why the normalized importance of each variable in the arithmetic mean is greater than that in the minimum function, the maximum function, and the geometric mean function. Indeed, if $f$ is a symmetric function, then $r(f,\{i\})=r(f,\{j\})$ for every $i,j\in N$ and, since $R^2_1(f)\leqslant 1$, by (\ref{eq:dfs4df56}) we have $r(f,\{i\})\leqslant 1/\sqrt n$.
\end{remark}

\section*{Acknowledgments}

The authors wish to thank Michel Beine, Miguel Couceiro, Paul G\'erard, and Samuel Nicolay for fruitful discussions. This research is supported by the internal research project F1R-MTH-PUL-09MRDO of the University of Luxembourg.


\end{document}